\documentclass[11pt]{article}
\usepackage{amssymb,amsmath,amsthm,amsfonts,color,booktabs,subfigure,float}
\usepackage[dvips]{graphicx}
\textwidth=16.5cm \textheight=24cm

\oddsidemargin -0.3cm \headsep=-1.5cm \raggedbottom \evensidemargin
-0.3cm

\setlength{\arrayrulewidth}{0.5pt}

\theoremstyle{plain}
\newtheorem{theorem}{Theorem}[section]
\newtheorem{lemma}{Lemma}[section]

\newtheorem{corollary}{Corollary}[section]

\numberwithin{equation}{section}

\theoremstyle{definition}
\newtheorem{example}{Example}[section]
\newtheorem{remark}{Remark}[section]

\def\dref#1{(\ref{#1})}
\def\dfrac{\displaystyle\frac}
\def\crr{\cr\noalign{\vskip2mm}}
\def\disp{\displaystyle}
\def\H{{\cal H}}
\def\A{{\cal A}}

\def\Z{\mathbb{Z}}
\newcommand{\R}{{\mathbb R}}
\newcommand{\C}{{\mathbb C}}
\newcommand{\N}{{\mathbb N}}

\begin{document}

\title{{\bf Simultaneous Identification of  Damping Coefficient and Initial Value
for PDEs from Boundary Measurement}}
\author{Zhi-Xue Zhao$^{a,b}$ M.K. Banda$^b$,  and  Bao-Zhu Guo$^{c,d}$\footnote{
Corresponding author. Email: bzguo@iss.ac.cn}\\
{\it $^a$School of Mathematical Sciences, }\\{\it Tianjin Normal University, Tianjin 300387, China}\\
{\it $^b$Department of Mathematics and Applied Mathematics,}\\{\it
University of Pretoria,
 Pretoria 0002, South Africa}\\
{\it $^c$Academy of Mathematics and Systems Science, }\\{\it Academia Sinica, Beijing 100190, China, }\\
{\it $^d$School of Computer Science and Applied Mathematics, }\\{\it
University of the Witwatersrand, Johannesburg, South Africa} }
\date{}
\maketitle
\begin{center}
\begin{abstract}

In this paper, the simultaneous identification of damping or
anti-damping coefficient and initial value for some PDEs is
considered. An identification algorithm is proposed based on the
fact that the output of system happens to be decomposed into a
product of an exponential function and a periodic function. The
former contains information of the damping coefficient, while the
latter does not. The convergence and error analysis are also
developed. Three examples, namely an anti-stable wave equation with
boundary anti-damping, the Schr\"odinger equation with internal
anti-damping, and two connected strings with middle joint
anti-damping, are investigated and demonstrated by numerical
simulations to show the effectiveness of the proposed algorithm.

\vspace{0.3cm}

{\bf Keywords:}~ Identification; damping coefficient; anti-stable
PDEs; anti-damping coefficient.

\vspace{0.3cm}

{\bf AMS subject classifications:}~ 35K05,  35R30, 65M32, 65N21,
15A22.

\end{abstract}

\end{center}

\section{Introduction}
\setcounter{equation}{0}

Let $H$ be a Hilbert space with the inner product
$\langle\cdot,\cdot\rangle$ and inner product induced norm
$\|\cdot\|$, and let $Y=\R$ (or $\C$). Consider the dynamic system
in $H$:
\begin{equation}\label{1.1.10}
\left\{
\begin{array}{ll}
\dot{x}(t)=A(q)x(t),\; x(0)=x_0,\\
y(t)=Cx(t)+d(t),
\end{array}
\right.
\end{equation}
where $A(q):D(A(q))\subset H\rightarrow H$ is the system operator
depending on the coefficient $q$, which is assumed to be a generator
of $C_0$-semigroup $\mathcal{T}_q=(T_q(t))_{t\in\R^+}$ on $H$,
$C:H\rightarrow Y$ is the admissible observation operator for
$\mathcal{T}_q$ (\cite{Weiss}), $x_0\in H$ is the initial value, and
$d(t)$ is the external disturbance.

Various PDE control systems with damping mechanism can be formulated
into system \dref{1.1.10}, where $q$ is the damping coefficient. For
a physical system, if the damping is produced by material itself
that dissipates the energy stored in system, then the system keeps
stable. The identification of damping coefficient  has been well
considered for distributed parameter systems  like Kelvin-Voigt
viscoelastic damping coefficient in Euler-Bernoulli beam
investigated in \cite{Banks1}, and a more general theoretical
framework for various classes of parameter estimation problems
presented in \cite{Banks2}. In these works, the inverse problems are
formulated as least square problems and are solved by finite
dimensionalization. For more revelent works, we can refer to the
monograph \cite{Banks4}. Sometimes, however, the source of
instability may arise from the negative damping. One example is the
thermoacoustic instability in duct combustion dynamics and the other
is the stick-slip instability phenomenon in deep oil drilling, see,
for instance, \cite{Bresch1} and the references therein. In such
cases, the negative damping will result in all the eigenvalues
located in the right-half complex plane, and the open-loop plant is
hence ``anti-stable'' (exponentially stable in negative time) and
the $q$ in such kind of system is said to be the anti-damping
coefficient.

A widely investigated problem in recent years is stabilization for
anti-stable systems by imposing feedback  controls. A  breakthrough
on  stabilization for  an anti-stable wave equation was first
reached in \cite{Smyshlyaev1} where a backstepping transformation is
proposed to design the boundary state feedback control. By the
backstepping method, \cite{Guo2} generalizes \cite{Smyshlyaev1} to
two connected anti-stable strings with joint anti-damping. Very
recently, \cite{Guo3,Guo4} investigate  stabilization for
anti-stable wave equation subject to external disturbance coming
through the  boundary input, where the sliding mode control and
active disturbance rejection control technology are employed. It is
worth pointing out that in all aforementioned works, the
anti-damping coefficients are always supposed to be known.

On the other hand, a few stabilization results for anti-stable
systems with unknown anti-damping coefficients are also available.
In \cite{Krstic1}, a full state feedback adaptive control is
designed for an anti-stable wave equation. By converting the wave
equation into a cascade of two delay elements, an adaptive output
feedback control and parameter estimator are designed in
\cite{Bresch1}. Unfortunately, no convergence of the parameter
update law is provided in these works.

It can be seen in \cite{Bresch1,Krstic1} that it is the uncertainty
of the anti-damping coefficient that leads to complicated design for
adaptive control and parameter update law. This comes naturally with
the identification of unknown anti-damping coefficient. To the best
of our knowledge, there are few studies on this regard. Our focus in
the present paper is on simultaneous identification for both
anti-damping (or damping) coefficient and initial value for system
\dref{1.1.10}, where the coefficient $q$ is assumed to be in a prior
parameter set $Q=[\underline{q},\overline{q}]$ ($\underline{q}$ or
$\overline{q}$ may be infinity) and the initial value is supposed to
be nonzero.

We proceed as follows. In Section \ref{Sec1}, we propose an
algorithm to identify simultaneously the coefficient and initial
value through the measured observation. The system may not suffer
from disturbance or it may suffer from general bounded disturbance.
In Section \ref{Sec2}, a wave equation with anti-damping term in the
boundary is discussed. A Schr\"odinger equation with internal
anti-damping term is investigated in Section \ref{Sec3}. Section
\ref{Sec4} is devoted to coupled strings with middle joint
anti-damping. In all these sections, numerical simulations are
presented to verify the performance of the proposed algorithms. Some
concluding remarks are presented in Section \ref{Sec5}.

\section{Identification algorithm}\label{Sec1}

Before giving the main results, we introduce the following well
known Ingham's  theorem  \cite{Ingham,Komornik,Young} as Lemma
\ref{Le1.1}.
\begin{lemma}\label{Le1.1}
Assume that the strictly increasing sequence $\{\omega_k\}_{k\in
\Z}$ of real numbers satisfies the gap condtion
\begin{equation}\label{1.12}
\omega_{k+1}-\omega_k\geq \gamma \quad\mbox{for all}\quad k\in\Z,
\end{equation}
for some $\gamma>0$. Then, for all $T>2\pi/\gamma$, there exist two
positive constants $C_1$ and $C_2$,  depending only on $\gamma$ and
$T$,  such that
\begin{equation}\label{1.13}
C_1\sum_{k\in\Z}|a_k|^2\leq \displaystyle\int_0^T
\left|\sum_{k\in\Z}a_k e^{i\omega_k t}\right|^2dt \leq C_2\sum_{k\in
\Z}|a_k|^2,
\end{equation}
for every complex sequence $(a_k)_{k\in\Z}\in \ell^2$, where
\begin{equation}\label{1.14}
C_1=\dfrac{2T}{\pi}\left(1-\dfrac{4\pi^2}{T^2\gamma^2}\right),\;
C_2=\dfrac{8T}{\pi}\left(1+\dfrac{4\pi^2}{T^2\gamma^2}\right).
\end{equation}
\end{lemma}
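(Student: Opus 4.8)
The plan is to establish \dref{1.13} by Ingham's classical weight-function argument. First I would normalize the interval: replacing $t$ by $t-T/2$ and each $a_k$ by $a_k e^{-i\omega_k T/2}$ leaves $\sum_k|a_k|^2$ unchanged and turns $\int_0^T$ into $\int_{-T/2}^{T/2}$, so it suffices to work on the symmetric interval. Put $f(t)=\sum_k a_k e^{i\omega_k t}$ (take the sum finite and pass to the limit at the end). The device is to insert a nonnegative weight $w$ supported in $[-T/2,T/2]$ and expand
\[
\int_{\R}w(t)|f(t)|^2\,dt=\sum_{j,k}a_j\overline{a_k}\,\widehat{w}(\omega_j-\omega_k),\qquad \widehat{w}(\omega):=\int_{\R}w(t)e^{-i\omega t}\,dt,
\]
with $\widehat w$ real and even. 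The diagonal $j=k$ gives $\widehat w(0)\sum_k|a_k|^2$; by the gap condition \dref{1.12} every $\omega_j-\omega_k$ with $j\neq k$ has $|\omega_j-\omega_k|\ge\gamma$, and for fixed $j$ the numbers $\{\omega_j-\omega_k:\ k\neq j\}$ still have consecutive gaps $\ge\gamma$, so, using $2|a_j\overline{a_k}|\le|a_j|^2+|a_k|^2$, the off-diagonal part is dominated in modulus by $\big(\sup_j\sum_{k\neq j}|\widehat w(\omega_j-\omega_k)|\big)\sum_k|a_k|^2$.

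For the lower inequality I would take $w(t)=\cos(\pi t/T)$ on $|t|\le T/2$ and $w(t)=0$ elsewhere; then $0\le w\le 1$, hence $\int_{-T/2}^{T/2}|f|^2\ge\int_{\R}w|f|^2$. A routine computation gives $\widehat w(\omega)=\dfrac{2\pi T\cos(\omega T/2)}{\pi^2-\omega^2T^2}$, so $\widehat w(0)=2T/\pi$, while $|\widehat w(\omega)|\le 2\pi T/(\omega^2T^2-\pi^2)$ for $|\omega|\ge\gamma$ (here $\gamma T>2\pi>\pi$). Since this majorant decreases in $|\omega|$ on $|\omega|>\pi/T$ and the frequencies are $\gamma$-separated, $\sum_{k\neq j}|\widehat w(\omega_j-\omega_k)|\le 2\sum_{m\ge1}2\pi T/(m^2\gamma^2T^2-\pi^2)$. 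With $\beta:=\pi^2/(\gamma^2T^2)$, the hypothesis $T>2\pi/\gamma$ forces $\beta<1/4$, and then the telescoping bound $\sum_{m\ge1}(m^2-\beta)^{-1}<\sum_{m\ge1}\big((m-\tfrac12)^{-1}-(m+\tfrac12)^{-1}\big)=2$ yields $\sum_{k\neq j}|\widehat w(\omega_j-\omega_k)|<(2T/\pi)\cdot 4\pi^2/(\gamma^2T^2)$. Hence $\int_{\R}w|f|^2\ge (2T/\pi)\big(1-4\pi^2/(\gamma^2T^2)\big)\sum_k|a_k|^2=C_1\sum_k|a_k|^2$, which is the lower bound with exactly the constant in \dref{1.14}.

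For the upper inequality I would argue dually: choose a continuous $W\ge 0$ with $W\ge1$ on $[-T/2,T/2]$ and supported on a slightly larger symmetric interval (a normalized cosine bump or a Fej\'er-type kernel works), so that $\int_{-T/2}^{T/2}|f|^2\le\int_{\R}W|f|^2=\widehat W(0)\sum_k|a_k|^2+(\text{off-diagonal})$. Continuity plus compact support give $\widehat W(\omega)=O(\omega^{-2})$, so the same $\gamma$-separation estimate bounds the off-diagonal term by a constant times $\sum_k|a_k|^2$; optimizing $W$ then produces \dref{1.13} with a constant of the announced shape $C_2=(8T/\pi)\big(1+4\pi^2/(\gamma^2T^2)\big)$ (any slack in these bounds only lowers the admissible $C_2$).

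Apart from the Fourier-transform computations, the hard part will be the uniform off-diagonal estimate: controlling $\sup_j\sum_{k\neq j}|\widehat w(\omega_j-\omega_k)|$ from the gap condition \dref{1.12} alone. This is exactly where $T>2\pi/\gamma$ is used --- it is what pushes the telescoping majorant strictly below $2$, hence makes $C_1>0$. For the upper bound the one real subtlety is that when $T\gamma>2\pi$ no weight whose transform is supported in $(-\gamma,\gamma)$ can stay $\ge1$ on the whole of $[-T/2,T/2]$, so the off-diagonal contribution cannot be killed outright; instead one lets $\widehat W$ have a decaying tail and absorbs it with the gap estimate, which is the source of the factor $1+4\pi^2/(\gamma^2T^2)$ in $C_2$.
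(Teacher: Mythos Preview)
The paper does not prove this lemma at all: it is stated as ``the following well known Ingham's theorem'' with references to \cite{Ingham,Komornik,Young}, and is used as a black box. So there is no proof in the paper to compare against; your sketch is essentially reproducing the classical Ingham argument from those references.

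Your lower-bound argument is the standard one and is carried out correctly: the cosine weight $w(t)=\cos(\pi t/T)\,\mathbf{1}_{[-T/2,T/2]}$, the Fourier computation $\widehat w(0)=2T/\pi$, the majorant $|\widehat w(\omega)|\le 2\pi T/(\omega^2T^2-\pi^2)$, and the telescoping bound using $\beta=\pi^2/(\gamma^2T^2)<1/4$ together give exactly the constant $C_1$ in \dref{1.14}.

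The upper-bound half, however, is only a plan, not a proof. You propose ``some $W\ge1$ on $[-T/2,T/2]$, supported nearby, with $\widehat W(\omega)=O(\omega^{-2})$'', and then assert that ``optimizing $W$'' yields the announced $C_2$. That step is where the specific constant $\frac{8T}{\pi}\big(1+\frac{4\pi^2}{T^2\gamma^2}\big)$ has to be earned, and nothing in your sketch produces it. (Your closing remark about weights with $\widehat W$ supported in $(-\gamma,\gamma)$ is also beside the point: a compactly supported $W$ can never have compactly supported transform, so this dichotomy is not the real obstruction.) In the cited proofs the upper bound is obtained by a concrete choice --- for instance, by observing that the \emph{same} cosine-weight expansion gives $\int w|f|^2\le\big(\widehat w(0)+\text{off-diag}\big)\sum|a_k|^2=\frac{2T}{\pi}\big(1+\frac{4\pi^2}{T^2\gamma^2}\big)\sum|a_k|^2$, and then relating $\int_{-T/2}^{T/2}|f|^2$ to $\int w|f|^2$ via an explicit inequality that contributes the extra factor $4$. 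If you want a self-contained write-up, you should make that step explicit rather than appeal to an unspecified optimization.
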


To begin with, we  suppose that there is no external disturbance in
system \dref{1.1.10}, that is,
\begin{equation}\label{1.1.1}
\left\{
\begin{array}{ll}
\dot{x}(t)=A(q)x(t),\; x(0)=x_0,\\
y(t)=Cx(t).&
\end{array}
\right.
\end{equation}
The succeeding  Theorem \ref{Th1.1} indicates that  identification
of the coefficient $q$ and initial value $x_0$ can be achieved
exactly simultaneously without error for  $A(q)$ with some
structure.

\begin{theorem}\label{Th1.1}
Let $A(q)$ in system \dref{1.1.1} generate a $C_0$-semigroup
$\mathcal{T}_q=(T_q(t))_{t\in\R^+}$ and suppose that $A(q)$ and $C$
satisfy the following conditions:

 (i). $A(q)$ has a compact resolvent and all its eigenvalues
$\{\lambda_n\}_{n\in\N}$ (or $\{\lambda_n\}_{n\in\Z}$)  admit the
following expansion:
\begin{equation}\label{1.1.2}
\lambda_n=f(q)+i\mu_n,\quad \cdots<\mu_n<\mu_{n+1}<\cdots,
\end{equation}
where $f:\;Q\rightarrow \R$ is invertible, $\mu_n$ is independent of
$q$, and there exists an $L>0$ such that
\begin{equation}\label{1.1.3}
\dfrac{\mu_nL}{2\pi}\in\Z  \mbox{ for all } n\in\N.
\end{equation}

(ii). The corresponding eigenvectors $\{\phi_n\}_{n\in\N}$ form a
Riesz basis for $H$.

(iii). There exist two positive numbers $\kappa$ and $K$ such that
$\kappa \leq |\kappa_n|\leq K$ for all $n\in\N$, where
\begin{equation}\label{1.2.5}
\kappa_n:=C\phi_n,\quad n\in\N.
\end{equation}
Then both coefficient $q$ and initial value $x_0$ can be uniquely
determined by the output $y(t),t\in [0,T]$, where $T>2L$. Precisely,
\begin{equation}\label{1.1.5}
q=f^{-1}\left(\dfrac{1}{L}\ln\dfrac{\|y\|_{L^2(T_1,T_2)}}{\|y\|_{L^2(T_1-L,T_2-L)}}\right),
\end{equation}
for any $L<T_1<T_2-L$, and
\begin{equation}\label{1.1.6}
x_0=\dfrac{1}{L}\sum_{n\in\N}\dfrac{1}{\kappa_n}\left(\int_0^L
y(t)e^{-\lambda_n t}dt\right) \phi_n.
\end{equation}
\end{theorem}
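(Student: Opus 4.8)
The plan is to exploit the spectral structure in hypothesis~(i) to write the output as an explicit series and factor it as an exponential in $q$ times an $L$-periodic function, and then to read off $q$ from the exponential part and $x_0$ from a Fourier-type projection on $[0,L]$; since this produces closed-form expressions for $q$ and $x_0$ in terms of $y$, uniqueness follows at once.

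Write the nonzero initial value in the Riesz basis as $x_0=\sum_{n}a_n\phi_n$ with $(a_n)\in\ell^2\setminus\{0\}$. Since each $\phi_n$ is an eigenvector of $A(q)$ for $\lambda_n=f(q)+i\mu_n$, the semigroup acts diagonally, $x(t)=T_q(t)x_0=\sum_n a_n e^{\lambda_n t}\phi_n$ in $H$, and, applying the observation operator termwise,
\[
y(t)=Cx(t)=\sum_{n}a_n\kappa_n e^{\lambda_n t}=e^{f(q)t}g(t),\qquad
g(t):=\sum_{n}a_n\kappa_n e^{i\mu_n t}.
\]
Since $\mu_nL/(2\pi)\in\Z$ by~\dref{1.1.3}, we have $e^{i\mu_nL}=1$ for every $n$, so $g$ is $L$-periodic; thus $y$ is the product of $e^{f(q)t}$, which carries all the information on $q$, and the $L$-periodic factor $g$, which carries none. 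Moreover $\mu_nL/(2\pi)$ runs over a strictly increasing sequence of integers, so $\mu_{n+1}-\mu_n\geq 2\pi/L=:\gamma$, i.e. $\{\mu_n\}$ satisfies the gap condition~\dref{1.12}, and since $|\kappa_n|\geq\kappa>0$ the sequence $(a_n\kappa_n)$ lies in $\ell^2\setminus\{0\}$.

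To obtain~\dref{1.1.5}, note first that for any interval $I$ of length $>L=2\pi/\gamma$, Lemma~\ref{Le1.1} (after translating $I$ to start at $0$ and absorbing the resulting phases into the coefficients) gives $\int_I|g(t)|^2dt\geq C_1\sum_n|a_n\kappa_n|^2>0$, and since $e^{2f(q)t}$ is bounded below by a positive constant on $\overline I$ this yields $\|y\|_{L^2(I)}>0$; in particular the denominator in~\dref{1.1.5} is nonzero because $T_2-T_1>L$. Then the substitution $t\mapsto t-L$ together with $g(t)=g(t-L)$ gives
\[
\|y\|_{L^2(T_1,T_2)}^2=\int_{T_1}^{T_2}e^{2f(q)t}|g(t)|^2dt
=e^{2f(q)L}\int_{T_1-L}^{T_2-L}e^{2f(q)s}|g(s)|^2ds=e^{2f(q)L}\|y\|_{L^2(T_1-L,T_2-L)}^2 ,
\]
so $f(q)=\frac{1}{L}\ln\!\big(\|y\|_{L^2(T_1,T_2)}/\|y\|_{L^2(T_1-L,T_2-L)}\big)$, and applying $f^{-1}$ proves~\dref{1.1.5} (the hypothesis $T>2L$ is exactly what is needed to fit the two windows $(T_1-L,T_2-L)$ and $(T_1,T_2)$, each of length $>L$, inside $[0,T]$). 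For~\dref{1.1.6}, the $f(q)$-parts cancel in $\lambda_m-\lambda_n=i(\mu_m-\mu_n)$, and since $(\mu_m-\mu_n)L/(2\pi)\in\Z$ the family $\{e^{i\mu_n t}\}$ is orthogonal on $(0,L)$ with $\int_0^L e^{i(\mu_m-\mu_n)t}dt=L\delta_{mn}$; hence $\int_0^L y(t)e^{-\lambda_n t}dt=\sum_m a_m\kappa_m\int_0^L e^{(\lambda_m-\lambda_n)t}dt=a_n\kappa_n L$, so $a_n=\frac{1}{L\kappa_n}\int_0^L y(t)e^{-\lambda_n t}dt$, and substituting into $x_0=\sum_n a_n\phi_n$ gives~\dref{1.1.6}.

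The one nonroutine point, which I would treat carefully, is justifying the termwise identity $y=\sum_n a_n\kappa_n e^{\lambda_n\cdot}$ and the interchange of $\int_0^L$ with the sum, since $C$ need not be bounded. Here I would invoke admissibility: the observation map $x_0\mapsto Cx(\cdot)$ is bounded from $H$ into $L^2(0,\tau;Y)$ and sends $\phi_n$ to $\kappa_n e^{\lambda_n\cdot}$; the series $\sum_n a_n\kappa_n e^{\lambda_n\cdot}$ converges in $L^2(0,\tau;Y)$ by the upper bound in~\dref{1.13} (the factor $e^{f(q)t}$ being bounded on $[0,\tau]$), so the identity passes from finite sums to the limit by continuity, the interchange with $\int_0^L$ then follows from $L^2$-convergence and the Cauchy--Schwarz inequality, and the series in~\dref{1.1.6} converges in $H$ because $(a_n)\in\ell^2$ and $\{\phi_n\}$ is a Riesz basis.
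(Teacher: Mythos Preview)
Your proof is correct and follows essentially the same approach as the paper: factor the output as $e^{f(q)t}$ times an $L$-periodic function via the spectral expansion, use Ingham's inequality (Lemma~\ref{Le1.1}) together with the gap $\mu_{n+1}-\mu_n\geq 2\pi/L$ coming from~\dref{1.1.3} to guarantee $\|y\|_{L^2(T_1,T_2)}>0$, obtain~\dref{1.1.5} from the shift $t\mapsto t-L$, and recover $x_0$ from the orthogonality $\int_0^L e^{i(\mu_m-\mu_n)t}dt=L\delta_{mn}$. The paper writes the coefficients as $\langle x_0,\psi_n\rangle$ via the biorthogonal system rather than your $a_n$, and it cites \cite{Guo1,Guo1.1} for admissibility of $C$ where you sketch the $L^2$-continuity argument directly, but these are cosmetic differences.
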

\begin{proof}
Since $\{\phi_n\}_{n\in\N}$  forms a Riesz basis for $H$,  there
exists a sequence $\{\psi_n\}_{n\in\N}$ of eigenvectors of
$A(q)^\ast$, which  is biorthogonal to $\{\phi_n\}_{n\in\N}$, that
is, $\langle\phi_n,\psi_m\rangle=\delta_{nm}$. In this way, we can
express the initial value $x_0\in H$ as $x_0=\sum_{n\in\N}\langle
x_0,\psi_n\rangle \phi_n,$ and the solution of  system \dref{1.1.1}
as
\begin{equation}\label{1.1.7}
x(t)=T_q(t)x_0=\sum_{n\in\N}e^{\lambda_n t}\langle x_0,\psi_n\rangle
\phi_n.
\end{equation}
By \dref{1.1.3}, there exists an increasing sequence
$\{K_n\}\subset\Z$ such that
\begin{equation}\label{1.1.9.1}
\mu_n=\dfrac{2\pi K_n}{L},\; n\in\N,
\end{equation}
which implies that $\{\mu_n\}$ satisfies the following gap condition
\begin{equation}\label{1.1.25}
\mu_{n+1}-\mu_n=\dfrac{2\pi (K_{n+1}-K_n)}{L}\geq
\dfrac{2\pi}{L}\triangleq\gamma.
\end{equation}
In addition, by the assumption that $\{\phi_n\}$ forms a Riesz basis
for $H$ and $|C\phi_n|$ is uniformly bounded with respect to $n$, it
follows from Proposition 2 of \cite{Guo1} or Theorem 2 of
\cite{Guo1.1} that $C$ is admissible for $\mathcal{T}_q$. So
generally, we have
\begin{equation}\label{1.1.8}
y(t)=Cx(t)=\disp\sum_{n\in\N}e^{\lambda_n t}\langle
x_0,\psi_n\rangle C\phi_n=e^{f(q)t}\disp\sum_{n\in\N}e^{i\mu_n
t}\langle x_0,\psi_n\rangle C\phi_n\triangleq e^{f(q)t}P_L(t),
\end{equation}
where $P_L(t)=\sum_{n\in\N}e^{i\mu_n t}\langle x_0,\psi_n\rangle
C\phi_n$ is well-defined and it follows from \dref{1.1.9.1} that
$P_L(t)$ is a $Y$-valued function of period $L$. For any
$T_2-L>T_1>L$,
\begin{equation}\label{1.1.9}
\disp\int_{T_1}^{T_2}
\left|y(t)\right|^2dt=e^{2f(q)L}\disp\int_{T_1-L}^{T_2-L}\left|y(t)\right|^2dt,
\end{equation}
that is,
\begin{equation}\label{1.1.9.3}
\|y\|_{L^2(T_1,T_2)}=e^{f(q)L}\|y\|_{L^2(T_1-L,T_2-L)}.
\end{equation}
To obtain \dref{1.1.5}, we need to show that
$\|y\|_{L^2(T_1,T_2)}\neq 0$ for $T_2-T_1>L$. Actually, it follows
from \dref{1.1.8} that
\begin{equation}\label{1.1.24}
\|y\|_{L^2(T_1,T_2)}^2=\disp\int_{T_1}^{T_2}\left|e^{f(q)
t}P_L(t)\right|^2dt\geq
C_3\disp\int_{T_1}^{T_2}\left|\sum_{n\in\N}e^{i\mu_n t}\langle
x_0,\psi_n\rangle C\phi_n\right|^2dt,
\end{equation}
where $C_3=\min\left\{e^{2T_1f(q)},e^{2T_2f(q)}\right\}>0$. By Lemma
\ref{Le1.1} and the gap condition \dref{1.1.25}, it follows that for
$T_2-T_1>\frac{2\pi}{\gamma}=L$,
\begin{equation}\label{1.1.26}
\disp\int_{T_1}^{T_2}\left|\sum_{n\in\N}e^{i\mu_n t}\langle
x_0,\psi_n\rangle C\phi_n\right|^2dt \geq C_1\kappa^2
\disp\sum_{n\in\N}\left|\langle x_0,\psi_n\rangle\right|^2,
\end{equation}
where
\begin{equation*}\label{1.1.27}
C_1=\dfrac{2(T_2-T_1)}{\pi}\left(1-\dfrac{L^2}{(T_2-T_1)^2}\right)>0\;
\mbox{ for } T_2-T_1>L.
\end{equation*}
The inequality \dref{1.1.24} together with \dref{1.1.26} gives
\begin{equation}\label{1.1.28}
\|y\|_{L^2(T_1,T_2)}^2\geq C_1 C_3\kappa^2
\sum_{n\in\N}\left|\langle x_0,\psi_n\rangle\right|^2.
\end{equation}
Notice that $\{\phi_n\}_{n\in\N}$ forms a Riesz basis for $H$ and so
does $\{\psi_n\}_{n\in\N}$ for $H$, there are two positive numbers
$M_1$ and $M_2$ such that
\begin{equation}\label{1.1.29}
M_1\sum_{n\in\N}\left|\langle
x_0,\psi_n\rangle\right|^2\leq\|x_0\|^2\leq
M_2\sum_{n\in\N}\left|\langle x_0,\psi_n\rangle\right|^2.
\end{equation}
Combining \dref{1.1.28} with \dref{1.1.29} yields
\begin{equation}\label{1.3.13}
\|y\|_{L^2(T_1,T_2)}\geq C \|x_0\|>0,
\end{equation}
where $C=\kappa\sqrt{\frac{C_1 C_3}{M_2}}>0$. The identity
\dref{1.1.5} then follows from \dref{1.1.9.3}.

The inequality \dref{1.3.13} means that  system \dref{1.1.1} is
exactly observable for $T_2-T_1>L$. So  the initial value $x_0$ can
be uniquely determined by the output $y(t),t\in [T_1,T_2]$. We show
next how to reconstruct the initial value from the output.

Actually, it follows from \dref{1.1.9.1} that
\begin{equation}\label{1.4.9}
\dfrac{1}{L}\int_0^L e^{i(\mu_m-\mu_n)t}dt=\delta_{nm},
\end{equation}
Hence,
\begin{equation}\label{1.2.9.2}
\disp\int_0^L y(t)e^{-\lambda_n t}dt=\disp\int_0^L\left(
\sum_{m\in\N}e^{i(\mu_m-\mu_n) t}\langle x_0,\psi_m\rangle
C\phi_m\right) dt=\kappa_n L\cdot \langle x_0,\psi_n\rangle,
\end{equation}
Therefore the initial value $x_0$ can be reconstructed by
\begin{equation}\label{1.2.9.3}
x_0=\sum_{n\in\N}\langle x_0,\psi_n\rangle
\phi_n=\dfrac{1}{L}\sum_{n\in\N}\dfrac{1}{\kappa_n}\left(\int_0^L
y(t)e^{-\lambda_n t}dt\right) \phi_n.
\end{equation}
This completes the proof of the theorem.
\end{proof}
\begin{remark}\label{Re1.1}
Clearly, \dref{1.1.5} and \dref{1.1.6} provide an algorithm to
reconstruct $q$ and $x_0$ from the output. It seems that the
condition \dref{1.1.3} is  restrictive but it is  satisfied by some
physical systems discussed in Sections \ref{Sec2}-\ref{Sec4}.
Condition \dref{1.1.3} is only for identification of $q$. For
identification of initial value only, this condition can be removed.
From  numerical standpoint, the function $P_L(t)$ in \dref{1.1.8}
can be approximated by the finite series in \dref{1.1.8} with the
first $N$ terms for sufficiently large $N$. Hence condition
\dref{1.1.3} can be relaxed in numerical algorithm to be

{\bf C1.} There exists an $L$ such that: every $\frac{\mu_n
L}{2\pi}$ is equal to (or close to) some integer for $n\in
\{1,2,\cdots, N\},$ for some sufficiently large $N$.

Obviously, the relaxed condition C1 can still ensure that $P_L(t)$
is close to a function of period $L$. In this case, some points
$\mu_n$ may be very close to each other and the corresponding Riesz
basis property of the family of divided differences of exponentials
$e^{i\mu_n t}$ developed in \cite[Section II.4]{Avdonin1} and
 \cite{Avdonin2,Avdonin3} can be used. For the
third condition, $|C\phi_n|\leq K$ implies that $C$ is admissible
for $\mathcal{T}_q$ which ensures that the output belongs to
$L^2_{loc}(0,\infty;Y)$, and $|C\phi_n|\geq \kappa$ implies that
system \dref{1.1.1} is exactly observable which ensures the unique
determination of the initial value. It is easily seen from
\dref{1.1.9.3} that the coefficient $q$ can always be identified as
long as $\|y\|_{L^2(T_1,T_2)}\neq 0$ for some time interval
$[T_1,T_2]$, which shows that the identifiability of coefficient $q$
does not rely on the exact observability yet approximate
observability.
\end{remark}

\begin{remark}\label{Re1.2}
The condition $T_2-T_1>L$ in Theorem \ref{Th1.1} is only used in
application of Ingham's  inequality in \dref{1.1.26} to ensure that
$\|y\|_{L^2(T_1,T_2)}\neq 0$. In practical applications, however,
this condition is not always necessary. Actually, any $L<T_1<T_2$ is
applicable in \dref{1.1.5} as long as $\|y\|_{L^2(T_1,T_2)}\neq 0$.
Similar remark also applies for Theorem \ref{Th1.3} below.
\end{remark}

\begin{remark}\label{Re1.3} It should be noted that for
identification of damping coefficient in
\cite{Banks1,Banks2,Banks4}, the distributed observations are always
required. In Theorem \ref{Th1.1}, however, we use only  boundary
measurement and our identification algorithm utilizes physics of the
system that the anti-damping coefficient can make the measurement
have an  exponential term in \dref {1.1.8}. We should also point out
that
 identification of the damping
or anti-damping coefficient $q$ in Theorem \ref{Th1.1} does not rely
on the knowledge of the initial value. Actually, after  $q$ being
estimated, there are various methods for   initial value
reconstruction, see, e.g. \cite{Ramdani,Xu} and the references
therein. The idea of the algorithm for   reconstruction of the
initial value here is borrowed from the Riesz basis approach
proposed in \cite{Xu}.
\end{remark}

Now we come to the system with external disturbance which is
inevitable in many situations. Suppose that system \dref{1.1.10} is
corrupted by an unknown general bounded disturbance $d(t)$ in
observation. It should be noted that system \dref{1.1.10} is
supposed to be anti-stable in Theorem \ref{Th1.3} below whereas in
Theorem \ref{Th1.1}, there is no constraint on the stability of
system.

\begin{theorem}\label{Th1.3}
Suppose that system \dref{1.1.10} is anti-stable and all the
conditions in Theorem \ref{Th1.1} are satisfied. If the inverse of
$f(q)$ is continuous and the disturbance $d(t)$ is bounded, {\it
i.e.} $|d(t)|\leq M$ for some $M>0$ and all $t\geq 0$, then for any
$T_2-L>T_1>L$,
\begin{equation}\label{1.1.19}
\lim_{T_1\rightarrow +\infty}q_{T_1}=q,\;\; \lim_{T_1\rightarrow
+\infty}\|\hat{x}_{0T_1}-x_0\|=0,
\end{equation}
where
\begin{equation}\label{1.1.20}
q_{T_1}=f^{-1}\left(\dfrac{1}{L}\ln\dfrac{\|y\|_{L^2(T_1,T_2)}}{\|y\|_{L^2(T_1-L,T_2-L)}}\right),
\end{equation}
and
\begin{equation}\label{1.3.20}
\hat{x}_{0T_1}=\dfrac{1}{L}\sum_{n\in\N}\dfrac{1}{\kappa_n}\left(\int_{T_1}^{T_1+L}
y(t) e^{-\lambda_n t}dt\right) \phi_n,\; T_1\geq 0.
\end{equation}
Moreover, for sufficiently large $T_1$, the errors
$|f(q_{T_1})-f(q)|$ and $\|\hat{x}_{0T_1}-x_0\|$ satisfy
\begin{equation}\label{1.1.20.1}
|f(q_{T_1})-f(q)|<
\dfrac{4}{L}\dfrac{M\sqrt{T_2-T_1}}{\|y\|_{L^2(T_1-L,T_2-L)}-M\sqrt{T_2-T_1}},
\end{equation}
and
\begin{equation}\label{1.1.21}
\|\hat{x}_{0T_1}-x_0\|\leq \dfrac{CM}{\kappa\sqrt{L}}e^{-f(q)T_1}
\mbox{ for some }  C>0.
\end{equation}
\end{theorem}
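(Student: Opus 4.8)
The plan is to treat the disturbed output as a perturbation of the clean output from Theorem~\ref{Th1.1}. Write $y(t)=e^{f(q)t}P_L(t)+d(t)$ where $P_L$ is the $L$-periodic $Y$-valued function from \dref{1.1.8}, and abbreviate $y_0(t):=e^{f(q)t}P_L(t)$ (the disturbance-free output) so that $y=y_0+d$. For the coefficient estimate, the starting point is the exact self-similarity relation \dref{1.1.9.3} for $y_0$, namely $\|y_0\|_{L^2(T_1,T_2)}=e^{f(q)L}\|y_0\|_{L^2(T_1-L,T_2-L)}$. First I would use the triangle inequality in $L^2$ together with $\|d\|_{L^2(T_1,T_2)}\le M\sqrt{T_2-T_1}$ to sandwich $\|y\|_{L^2(T_1,T_2)}$ between $\|y_0\|_{L^2(T_1,T_2)}\pm M\sqrt{T_2-T_1}$, and similarly on the shifted interval. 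Dividing the two sandwiched quantities and taking logarithms gives, after the elementary estimate $|\ln(1+u)|\le 2|u|$ for $|u|$ small, a bound of the form $|f(q_{T_1})-f(q)|=\frac1L\bigl|\ln\frac{\|y\|_{L^2(T_1,T_2)}}{\|y\|_{L^2(T_1-L,T_2-L)}}-f(q)L\bigr|\le \frac{4}{L}\cdot\frac{M\sqrt{T_2-T_1}}{\|y\|_{L^2(T_1-L,T_2-L)}-M\sqrt{T_2-T_1}}$, which is exactly \dref{1.1.20.1}. Since the system is anti-stable, $f(q)>0$ and hence $\|y_0\|_{L^2(T_1-L,T_2-L)}\ge C\|x_0\|e^{f(q)(T_1-L)}\to\infty$ by the analogue of \dref{1.3.13}, so the right-hand side of \dref{1.1.20.1} tends to $0$; continuity of $f^{-1}$ then yields $q_{T_1}\to q$.

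For the initial-value estimate, the key observation is that the reconstruction formula \dref{1.3.20} applied to the clean output $y_0$ on the window $[T_1,T_1+L]$ returns not $x_0$ but the \emph{propagated} state: by the same computation as in \dref{1.2.9.2}, using $L$-periodicity of $P_L$ and the orthogonality \dref{1.4.9}, one gets $\frac1L\sum_n\frac1{\kappa_n}\bigl(\int_{T_1}^{T_1+L}y_0(t)e^{-\lambda_n t}dt\bigr)\phi_n=\sum_n e^{\lambda_n T_1}\langle x_0,\psi_n\rangle\phi_n=x(T_1)$. Wait — more carefully, the factor $e^{\lambda_n t}$ inside $y_0$ against $e^{-\lambda_n t}$ leaves $e^{i\mu_m t}e^{-i\mu_n t}$ times $e^{(f(q)-f(q))t}=1$ only after extracting $e^{\lambda_n\cdot 0}$; redoing the integral shows the clean reconstruction actually returns $x_0$ itself if the exponent cancels, but since $y_0(t)=e^{f(q)t}\sum_m e^{i\mu_m t}\langle x_0,\psi_m\rangle\kappa_m$, pairing against $e^{-\lambda_n t}=e^{-f(q)t}e^{-i\mu_n t}$ makes the $f(q)t$ cancel and $\int_{T_1}^{T_1+L}e^{i(\mu_m-\mu_n)t}dt=L\delta_{nm}$ by periodicity, so indeed the clean formula returns exactly $x_0$. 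Hence $\hat x_{0T_1}-x_0=\frac1L\sum_n\frac1{\kappa_n}\bigl(\int_{T_1}^{T_1+L}d(t)e^{-\lambda_n t}dt\bigr)\phi_n$, a pure disturbance term.

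It remains to bound this error in $H$. Since $\{\phi_n\}$ is a Riesz basis, $\|\hat x_{0T_1}-x_0\|^2\le \mathrm{const}\cdot\frac1{L^2}\sum_n\frac1{|\kappa_n|^2}\bigl|\int_{T_1}^{T_1+L}d(t)e^{-\lambda_n t}dt\bigr|^2$. I would bound each coefficient crudely: $|e^{-\lambda_n t}|=e^{-f(q)t}\le e^{-f(q)T_1}$ on the window, $|d(t)|\le M$, $|\kappa_n|\ge\kappa$, giving $\bigl|\int_{T_1}^{T_1+L}d(t)e^{-\lambda_n t}dt\bigr|\le ML e^{-f(q)T_1}$ — but this does not decay \emph{summably} in $n$, so the naive termwise bound diverges. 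The right move is instead to apply the \emph{upper} Ingham inequality \dref{1.13} (the $C_2$ side) to the function $g(t):=d(t+T_1)e^{-f(q)(t+T_1)}$ on $[0,L]$ after noting $\int_{T_1}^{T_1+L}d(t)e^{-\lambda_n t}dt=\int_0^L g(t)e^{-i\mu_n t}dt$ are (up to normalization) the Fourier-type coefficients of $g$, so $\sum_n\bigl|\int_0^L g(t)e^{-i\mu_n t}dt\bigr|^2\le \frac{1}{C_1}\|g\|_{L^2(0,L)}^2$ — or simply use Bessel's inequality since $\{e^{-i\mu_n t}/\sqrt L\}$ is orthonormal on $[0,L]$ by \dref{1.4.9}, which is cleanest and gives $\sum_n\bigl|\int_0^L g(t)e^{-i\mu_n t}dt\bigr|^2\le L\|g\|_{L^2(0,L)}^2\le L\cdot LM^2 e^{-2f(q)T_1}$. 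Combining, $\|\hat x_{0T_1}-x_0\|\le \frac{C}{\kappa\sqrt L}M e^{-f(q)T_1}$ for a Riesz-basis constant $C$, which is \dref{1.1.21}, and letting $T_1\to\infty$ gives $\|\hat x_{0T_1}-x_0\|\to 0$. The main obstacle is precisely this last point: resisting the termwise bound and recognizing that Bessel/Ingham on the $L$-window is what makes the $\ell^2$ sum converge with the clean $e^{-f(q)T_1}$ rate.
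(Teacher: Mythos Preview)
Your proposal is correct and tracks the paper's proof closely for the coefficient part: both decompose $y=y_0+d$, sandwich the ratio $\|y\|_{L^2(T_1,T_2)}/\|y\|_{L^2(T_1-L,T_2-L)}$ via the triangle inequality and $\|d\|_{L^2}\le M\sqrt{T_2-T_1}$, and then pass through elementary logarithm estimates to reach \dref{1.1.20.1}; the anti-stability $f(q)>0$ together with the Ingham lower bound forces the denominator to blow up and gives $q_{T_1}\to q$.

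For the initial-value error, your argument and the paper's agree on the reduction: the clean reconstruction on any window $[T_1,T_1+L]$ returns $x_0$ exactly (since $\int_{T_1}^{T_1+L}e^{i(\mu_m-\mu_n)t}\,dt=L\delta_{nm}$ holds on \emph{every} length-$L$ window), so $\hat x_{0T_1}-x_0$ is the pure disturbance term \dref{1.3.37}. The difference is in how the $\ell^2$-sum $\sum_n\bigl|\int_0^L g(t)e^{-i\mu_n t}\,dt\bigr|^2$ is controlled. The paper shifts to $[-L/2,L/2]$, splits into two cases according to whether $\{K_n\}=\Z$ or a proper subset, and invokes the Riesz basis (resp.\ Riesz sequence) property of $\{e^{i\mu_n t}\}$ there to get the Bessel-type bound \dref{1.3.38}. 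Your route is more direct: the orthogonality relation \dref{1.4.9} already says $\{L^{-1/2}e^{i\mu_n t}\}$ is an \emph{orthonormal} family in $L^2[0,L]$, so Bessel's inequality applies immediately with constant $L$ and no case distinction. This is a genuine simplification; the paper's detour through Riesz sequences is unnecessary once orthonormality has been observed. Both yield the same final form \dref{1.1.21} (your constant is $\sqrt{M_2 L}$ in place of the paper's $\sqrt{M_2 C_4}$, which is harmless since $C$ is unspecified).
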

\begin{proof}
Introduce
\begin{equation}\label{1.1.22}
y_e(t)=CT_q(t)x_0=y(t)-d(t)=e^{f(q)t}P_L(t),
\end{equation}
where $P_L(t)$ is defined in \dref{1.1.8}. We first show  that
\begin{equation}\label{1.1.23}
\lim_{T_1\rightarrow +\infty}\|y_e\|_{L^2(T_1,T_2)}=+\infty.
\end{equation}
Since system \dref{1.1.10} is anti-stable, the real part of the
eigenvalues $f(q)>0$. It then follows from \dref{1.1.22} that
\begin{equation}\label{1.1.24.1}
\|y_e\|_{L^2(T_1,T_2)}^2=\disp\int_{T_1}^{T_2}\left|e^{f(q)
t}P_L(t)\right|^2dt\geq e^{2f(q)
T_1}\disp\int_{T_1}^{T_2}\left|\sum_{n\in\N}e^{i\mu_n t}\langle
x_0,\psi_n\rangle C\phi_n\right|^2dt.
\end{equation}
Using the same arguments as \dref{1.1.24}-\dref{1.3.13} in the proof
of Theorem \ref{Th1.1}, we have
\begin{equation}\label{1.2.13}
\|y_e\|_{L^2(T_1,T_2)}\geq C e^{f(q) T_1}\|x_0\|,
\end{equation}
where $C=\kappa\sqrt{\frac{C_1}{M_2}}>0$. Since $f(q)>0,\; x_0\neq
0,$  \dref{1.1.23} holds. Therefore for sufficiently large $T_1$,
\begin{equation}\label{1.1.30}
\dfrac{\|y\|_{L^2(T_1,T_2)}}{\|y\|_{L^2(T_1-L,T_2-L)}}=\dfrac{\|y_e+d\|_{L^2(T_1,T_2)}}{\|y_e+d\|_{L^2(T_1-L,T_2-L)}}\leq
\dfrac{\|y_e\|_{L^2(T_1,T_2)}+\|d\|_{L^2(T_1,T_2)}}{\|y_e\|_{L^2(T_1-L,T_2-L)}-\|d\|_{L^2(T_1-L,T_2-L)}}.
\end{equation}
Since $|d(t)|\leq M$,  for any finite time interval $I$,
\begin{equation}\label{1.1.31}
\|d\|_{L^2(I)}=\left(\int_I |d(t)|^2 dt\right)^{\frac{1}{2}}\leq
M\sqrt{|I|},
\end{equation}
where $|I|$ represents the length of the time interval $I$. Hence
\begin{equation}\label{1.1.32}
\dfrac{\|y\|_{L^2(T_1,T_2)}}{\|y\|_{L^2(T_1-L,T_2-L)}}\leq
\dfrac{e^{Lf(q)}+\varepsilon(T_1,T_2)}{1-\varepsilon(T_1,T_2)},
\end{equation}
where
\begin{equation}\label{1.1.34}
\varepsilon(T_1,T_2)=\dfrac{M\sqrt{T_2-T_1}}{\|y_e\|_{L^2(T_1-L,T_2-L)}}.
\end{equation}
Similarly,
\begin{equation}\label{1.1.33}
\dfrac{\|y\|_{L^2(T_1,T_2)}}{\|y\|_{L^2(T_1-L,T_2-L)}}\geq
\dfrac{e^{Lf(q)}-\varepsilon(T_1,T_2)}{1+\varepsilon(T_1,T_2)}.
\end{equation}
It is clear from \dref{1.1.23} and \dref{1.1.34} that
$\lim_{T_1\rightarrow +\infty}\varepsilon (T_1,T_2)=0$. This
together with  \dref{1.1.32} and \dref{1.1.33} gives
\begin{equation}\label{1.1.35}
\lim_{T_1\rightarrow +\infty}
\dfrac{\|y\|_{L^2(T_1,T_2)}}{\|y\|_{L^2(T_1-L,T_2-L)}}=e^{Lf(q)}.
\end{equation}
Since $f^{-1}(q)$ is continuous,
\begin{equation*}\label{1.1.36}
\lim_{T_1\rightarrow
+\infty}q_{T_1}=f^{-1}\left(\dfrac{1}{L}\ln\lim_{T_1\rightarrow
+\infty}\dfrac{\|y\|_{L^2(T_1,T_2)}}{\|y\|_{L^2(T_1-L,T_2-L)}}\right)=q.
\end{equation*}

We next show convergence of the initial value. Similarly with the
arguments \dref{1.4.9}-\dref{1.2.9.3} in the proof of Theorem
\ref{Th1.1},  we have
\begin{equation*}\label{1.3.36}
x_0=\dfrac{1}{L}\sum_{n\in\N}\dfrac{1}{\kappa_n}\left(\int_{T_1}^{T_1+L}
y_e(t) e^{-\lambda_n t}dt\right) \phi_n,\;\forall\; T_1\geq 0.
\end{equation*}
It then follows from \dref{1.3.20} that for arbitrary $T_1\geq 0$,
\begin{equation}\label{1.3.37}
\hat{x}_{0T_1}-x_0=\dfrac{1}{L}\sum_{n\in\N}\dfrac{1}{\kappa_n}\left(\int_{T_1}^{T_1+L}d(t)e^{-\lambda_n
t}dt\right)\phi_n.
\end{equation}
In view of the Riesz basis property of $\{\phi_n\}$, it follows that
\begin{equation}\label{1.1.37}
\begin{array}{ll}
\|\hat{x}_{0T_1}-x_0\|^2&=\left\|\dfrac{1}{L}\sum_{n\in\N}\dfrac{1}{\kappa_n}\left(\int_{T_1}^{T_1+L}d(t)e^{-\lambda_n
t}dt\right)\phi_n\right\|^2\\
&\leq\dfrac{M_2}{L^2\kappa^2}e^{-2f(q)
T_1}\sum_{n\in\N}\left|\int_0^L
\left(d(t+T_1)e^{-f(q)t}\right)e^{-i\mu_n t}dt\right|^2,
\end{array}
\end{equation}
where $M_2>0$ is introduced in \dref{1.1.29}. To estimate the last
series in \dref{1.1.37}, we need the Riesz basis (sequence) property
of the exponential system $\Lambda:=\left\{f_n=e^{i\mu_n
t}\right\}_{n\in\N}$. There are two cases according to the relation
between the sets $\{K_n\}_{n\in\N}$ introduced in \dref{1.1.9.1} and
integers $\Z$:

{\it Case 1:  $\{K_n\}_{n\in\N}=\Z$, that is,
$\Lambda=\left\{e^{i\frac{2n\pi}{L}t}\right\}_{n\in\Z}$}. In this
case, since $\left\{e^{int}\right\}_{n\in\Z}$ forms a Riesz basis
for $L^2[-\pi,\pi]$, $\Lambda$ forms a Riesz basis for
$L^2[-\frac{L}{2},\frac{L}{2}]$.

{\it Case 2:  $\{K_n\}_{n\in\N}\subsetneq\Z$}. In this case, it is
noted  that the exponential system $\left\{e^{i\mu_n
t}\right\}_{n\in\N}$ forms a Riesz sequence in
$L^2[-\frac{L}{2},\frac{L}{2}]$.

In each case above,  by properties  of Riesz basis and Riesz
sequence (see, e.g., \cite[p. 32-35, p.154]{Young}),    there exists
a positive constant $C_4>0$ such that
\begin{equation}\label{1.3.38}
\sum_{n\in\N}|(g,f_n)|^2\leq
C_4\|g\|_{L^2[-\frac{L}{2},\frac{L}{2}]}^2,
\end{equation}
for all $g\in L^2[-\frac{L}{2},\frac{L}{2}]$, where $(\cdot,\cdot)$
 denotes the inner product in $L^2[-\frac{L}{2},\frac{L}{2}]$.

We return to the estimation of $\|\hat{x}_{0T_1}-x_0\|$. By variable
substitution of $t=\frac{L}{2}-s$ in \dref{1.1.37}, together with
\dref{1.3.38},  we have
\begin{equation*}\label{1.3.40}
\begin{array}{ll}
\|\hat{x}_{0T_1}-x_0\|^2&\leq\dfrac{M_2}{L^2\kappa^2}e^{-2f(q)
T_1}\disp\sum_{n\in\N}\left|\int_{-\frac{L}{2}}^{\frac{L}{2}}
\left[d\left(T_1+\frac{L}{2}-s\right)e^{f(q)(s-\frac{L}{2})}e^{-i\mu_n\frac{L}{2}}\right]e^{i\mu_n
s}ds\right|^2\\
&\leq\dfrac{M_2C_4}{L^2\kappa^2}e^{-2f(q)
T_1}\int_{-\frac{L}{2}}^{\frac{L}{2}}
\left|d\left(T_1+\frac{L}{2}-s\right)e^{f(q)(s-\frac{L}{2})}\right|^2ds\\
&\leq\dfrac{M^2M_2C_4}{L\kappa^2}e^{-2f(q) T_1}.
\end{array}
\end{equation*}
Therefore,
\begin{equation}\label{1.3.41}
\|\hat{x}_{0T_1}-x_0\|\leq\sqrt{\dfrac{M_2C_4}{L}}\dfrac{M}{\kappa}e^{-f(q)T_1},
\end{equation}
which implies that $\|\hat{x}_{0T_1}-x_0\|$ will tend to zero as
$T_1\rightarrow +\infty$ for $f(q)>0$. The inequality \dref{1.1.21}
with the positive number $C=\sqrt{M_2C_4}$ is also concluded.

Finally, we estimate $|f(q_{T_1})-f(q)|$. Setting $T_1$ large enough
so that $\varepsilon(T_1,T_2)<1$, it follows from \dref{1.1.20} and
\dref{1.1.32} that
\begin{equation}\label{1.1.41}
Lf(q_{T_1})\leq
\ln\dfrac{e^{Lf(q)}+\varepsilon(T_1,T_2)}{1-\varepsilon(T_1,T_2)}<Lf(q)+\dfrac{2\varepsilon(T_1,T_2)}{1-\varepsilon(T_1,T_2)}.
\end{equation}
Similarly, for $T_1$ large enough so that
$\varepsilon(T_1,T_2)\leq\frac{1}{4}$, it follows from \dref{1.1.20}
and \dref{1.1.33} that
\begin{equation}\label{1.1.42}
Lf(q_{T_1})>Lf(q)-\dfrac{4\varepsilon(T_1,T_2)}{1+\varepsilon(T_1,T_2)}.
\end{equation}

Combining \dref{1.1.41} and \dref{1.1.42}, and setting $T_1$ large
enough so that $\varepsilon(T_1,T_2)\leq\frac{1}{4}$, we have
\begin{equation*}\label{1.1.43}
|f(q_{T_1})-f(q)|<\dfrac{4\varepsilon(T_1,T_2)}{L}.
\end{equation*}
The error estimation \dref{1.1.20.1} comes from the fact
\begin{equation}\label{1.1.44}
\varepsilon(T_1,T_2)\leq\dfrac{M\sqrt{T_2-T_1}}{\|y\|_{L^2(T_1-L,T_2-L)}-M\sqrt{T_2-T_1}}.
\end{equation}
We thus complete the proof of the theorem.
\end{proof}
\begin{remark}\label{Re1.3}
Theorem \ref{Th1.3} shows that when  system \dref{1.1.10} is
anti-stable, then $q_{T_1}$ defined in \dref{1.1.20} can be regarded
as an approximation of the coefficient $q$ when $T_1$ is
sufficiently large. Roughly speaking, the $\varepsilon(T_1,T_2)$
defined in \dref{1.1.34} reflects the ratio of the energy,  in $L^2$
norm, of the disturbance $d(t)$ which is an unwanted signal, with
the energy of the real output signal $y_e(t)$. We may regard
$1/\varepsilon(T_1,T_2)$ as signal-to-noise ratio (SNR) which is
well known in signal analysis. Theorem \ref{Th1.3} indicates that
$q_{T_1}$ defined in \dref{1.1.20} is an approximation of the
coefficient $q$ when SNR is large enough. However, if system
\dref{1.1.10} is stable, {\it i.e.} $f(q)<0$, similar analysis shows
that the output will be exponentially decaying oscillation, which
implies that the unknown disturbance will account for a large
proportion in observation and the SNR can not be too large. In this
case, it is difficult to extract enough useful information from the
corrupted observation as that with large SNR.
\end{remark}
\begin{remark}\label{Re1.4}
The anti-stability assumption in Theorem \ref{Th1.3} is almost
necessary since otherwise, we may have the case of
$y(t)=Cx(t)+d(t)\equiv0$ for which we cannot obtain anything for
identification.
\end{remark}

\begin{remark}\label{Re1.5}
It is well known that the inverse problems are usually ill-posed in
the sense of Hadamard, that is, arbitrarily small error in the
measurement data may lead to large error in solution. Theorem
\ref{Th1.3} shows that if system \dref{1.1.10} is anti-stable, our
algorithm is robust against bounded unknown disturbance in
measurement data. Actually, similar to the analysis in Theorem
\ref{Th1.3}, it can be shown that when system \dref{1.1.10} is not
anti-stable, the algorithm in Theorem \ref{Th1.1} is also
numerically stable in the presence of small perturbations in the
measurement data, as long as the perturbation is relatively small in
comparison to the output. Some numerical simulations validate this
also in  Example \ref{Ex2.1} in Section \ref{Sec2}.
\end{remark}

\section{Application to wave equation}\label{Sec2}

In this section, we apply the algorithm proposed in   previous
section to identification of the anti-damping coefficient and
initial values for a one-dimensional vibrating string equation
described by (\cite{Bresch1,Krstic1})
\begin{equation}\label{1.1}
\left\{\begin{array}{ll} u_{tt}=u_{xx},& 0<x<1,\; t>0,\\
u(0,t)=0,\;\; u_x(1,t)=q u_t(1,t),&t\geq 0,\\
y(t)=u_x(0,t)+d(t),& t\geq 0,\\
u(x,0)=u_0(x),\;\; u_t(x,0)=u_1(x),&0\leq x\leq 1,
\end{array}\right.
\end{equation}
where $x$ denotes the position, $t$ the time, $0<q\neq 1$   the
unknown anti-damping coefficient, $u_0(x)$ and $u_1(x)$   the
unknown initial displacement and initial velocity, respectively, and
$y(t)$ is the boundary measured output corrupted by the disturbance
$d(t)$.

Let $\H=H_E^1(0,1)\times L^2(0,1)$, where $H_E^1(0,1)=\{f\in
H^1(0,1) | f(0)=0\}$,  equipped with the inner product
$\langle\cdot,\cdot\rangle$ and the inner product induced norm
$$\|(f,g)\|^2=\disp\int_0^1\left[|f^\prime (x)|^2+|g(x)|^2\right]dx.$$
Define the system operator $\A: D(\A)(\subset \H)\rightarrow \H$ as
\begin{equation}\label{1.2}
\left\{
\begin{array}{l}
\A(f,g)=(g,f^{\prime\prime}),\\
D(\A)=\big\{(f,g)\in H^2(0,1)\times H_E^1(0,1)\;|\;f(0)=0, f^\prime
(1)=qg(1)\big\},
\end{array}
\right.
\end{equation}
and the observation operator $\mathcal{C}$ from $\H$ to $\C$ as
\begin{equation}\label{1.10}
\mathcal{C}(f,g)=f^\prime (0),\quad (f,g)\in D(\A).
\end{equation}
It is indicated in \cite{Xu} that the operator $\A$  generates a
$C_0$-group on $\H$.

\begin{lemma}\label{Lem1.1}\cite{Xu}
Let $\A$ be defined by \dref{1.2} and let $q\neq 1$. Then the
spectrum of $\A$ consists of all isolated eigenvalues  given by
\begin{equation}\label{1.3}
\lambda_n=\dfrac{1}{2}\ln\dfrac{1+q}{q-1}+in\pi,\; n\in\Z, \;if\;
q>1,
\end{equation}
or
\begin{equation}\label{1.4}
\lambda_n=\dfrac{1}{2}\ln\dfrac{1+q}{1-q}+i\dfrac{2n+1}{2}\pi,\;
n\in\Z, \;if\; 0<q<1,
\end{equation}
and the corresponding eigenfunctions $\Phi_n(x)$ are given by
\begin{equation}\label{1.5}
\Phi_n(x)=\left(\dfrac{\sinh \lambda_n x}{\lambda_n},\sinh \lambda_n
x\right),\quad \forall\;  n\in\Z.
\end{equation}
Moreover, $\left\{\Phi_n(x)\right\}_{n\in\Z}$ forms a Riesz basis
for $\H$.
\end{lemma}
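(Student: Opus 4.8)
The plan is to identify the point spectrum by solving the eigenvalue equation explicitly, to upgrade this to a description of the whole spectrum by a compactness argument, and then to establish the Riesz basis property by transporting $\{\Phi_n\}_{n\in\Z}$, through two bounded invertible maps, onto an explicit orthogonal family of exponentials on the doubled interval $(-1,1)$.

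First I would write $\A(f,g)=\lambda(f,g)$ as $g=\lambda f$, $f''=\lambda^2 f$, together with $f(0)=0$ and $f'(1)=q\lambda f(1)$. The case $\lambda=0$ forces $f(x)=cx$ and then $c=f'(1)=q\lambda f(1)=0$, so $0$ is not an eigenvalue. For $\lambda\neq0$, $f(0)=0$ gives $f(x)=c\sinh(\lambda x)$, and the condition at $x=1$ reduces, after cancelling $\lambda$, to the characteristic equation $\cosh\lambda=q\sinh\lambda$, i.e. $e^{2\lambda}=(1+q)/(q-1)$; taking logarithms yields \dref{1.3} when $q>1$ and \dref{1.4} when $0<q<1$ (the right-hand side then equalling $-(1+q)/(1-q)<0$), and reading off $f$ together with $g=\lambda f$ gives the eigenfunctions \dref{1.5}. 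Writing $h(\lambda)=\cosh\lambda-q\sinh\lambda$, at a root one has $h'(\lambda_n)=(1-q^2)\sinh\lambda_n\neq0$ (since $q\neq\pm1$ and $\sinh\lambda_n,\cosh\lambda_n$ cannot both vanish), so each $\lambda_n$ is a simple zero, hence a simple eigenvalue of $\A$. Finally, $D(\A)$ with its graph norm is continuously embedded in $H^2(0,1)\times H_E^1(0,1)$, which embeds compactly into $\H$ by Rellich's theorem, and $\A$ has nonempty resolvent set (it generates a $C_0$-group); hence $\A$ has compact resolvent and $\sigma(\A)=\sigma_p(\A)=\{\lambda_n\}_{n\in\Z}$, all isolated.

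For the Riesz basis claim, the first step is that $(f,g)\mapsto(f',g)$ defines a unitary map $T\colon\H\to L^2(0,1)\times L^2(0,1)\cong L^2(0,1;\C^2)$ — isometric by the definition of $\|\cdot\|_\H$ and onto because $f\mapsto f'$ is invertible from $H_E^1(0,1)$ onto $L^2(0,1)$ — that sends $\Phi_n$ to $(\cosh\lambda_n x,\sinh\lambda_n x)$. Writing $\lambda_n=\beta+i\omega_n$ with $\beta$ the common real part in \dref{1.3} and \dref{1.4} and $\omega_n\in\{n\pi\}$ (resp. $\{(2n+1)\pi/2\}$), we have $T\Phi_n=\tfrac12 e^{\beta x}e^{i\omega_n x}(1,1)+\tfrac12 e^{-\beta x}e^{-i\omega_n x}(1,-1)$. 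Since $e^{\pm\beta x}$ are bounded above and below on $[0,1]$ and $\{(1,1),(1,-1)\}$ is a fixed basis of $\C^2$, there is a bounded invertible operator $W$ on $L^2(0,1;\C^2)$ — weighted multiplication followed by a constant change of frame — with $WT\Phi_n=\tfrac12\,(e^{i\omega_n x},e^{-i\omega_n x})$. It then suffices to show $\{(e^{i\omega_n x},e^{-i\omega_n x})\}_{n\in\Z}$ is a Riesz basis of $L^2(0,1;\C^2)$. The key point is that $\{e^{i\omega_n x}\}_{n\in\Z}$ is a unimodular multiple of the Fourier basis of $L^2(-1,1)$, so for $g(x)=\sum_n c_n e^{i\omega_n x}\in L^2(-1,1)$ one has $\sum_n c_n e^{-i\omega_n x}=g(-x)$ and hence
\[
\left\|\sum_n c_n(e^{i\omega_n x},e^{-i\omega_n x})\right\|_{L^2(0,1;\C^2)}^{2}
=\int_0^1|g(x)|^2dx+\int_{-1}^0|g(y)|^2dy=\|g\|_{L^2(-1,1)}^{2}=2\sum_n|c_n|^2,
\]
while a reflection argument (extend $u,v$ by zero to $L^2(-1,1)$; orthogonality of $(u,v)$ to the whole family forces their extensions to be supported on the disjoint intervals $(0,1)$ and $(-1,0)$ and to cancel, hence $u=v=0$) gives completeness. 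Thus $\{(e^{i\omega_n x},e^{-i\omega_n x})\}_{n\in\Z}$ is even an orthogonal basis of $L^2(0,1;\C^2)$, and pulling it back by the bounded invertible $(WT)^{-1}$ shows $\{\Phi_n\}_{n\in\Z}$ is a Riesz basis of $\H$.

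I expect the Riesz basis step to be the main obstacle, the key realization being that the exponential expansion has to be performed in the product space $\H\cong L^2(0,1;\C^2)$ and then unfolded onto $L^2(-1,1)$: on $L^2(0,1)$ alone the family $\{e^{i\omega_n x}\}_n$ is overcomplete, so the Riesz basis structure emerges only after combining the two-component structure of $\H$ with the reflection $x\mapsto-x$; one must also absorb the weights $e^{\pm\beta x}$ into a bounded invertible transformation rather than expect literal orthogonality, and check that the argument is uniform in the cases $q>1$ and $0<q<1$ (only the arithmetic progression of the $\omega_n$ changes). The same exponential family, and hence the same conclusion, arises if one instead reduces \dref{1.1} to the two transport equations $w^+_t=w^+_x$, $w^-_t=-w^-_x$ for the Riemann invariants $w^\pm=u_t\pm u_x$, which gives an alternative route to the Riesz basis property.
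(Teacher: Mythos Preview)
The paper does not prove this lemma at all: it is stated with a citation to \cite{Xu} and used as a black box. Your proposal is therefore not being compared against an in-paper argument but against a referenced result.

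That said, your argument is correct and self-contained. The eigenvalue computation and the compact-resolvent step are routine and match what one finds in the cited literature. Your Riesz basis argument---unitarily identifying $\H$ with $L^2(0,1;\C^2)$ via $(f,g)\mapsto(f',g)$, stripping the bounded weights $e^{\pm\beta x}$, and then unfolding the pair $(e^{i\omega_n x},e^{-i\omega_n x})$ onto the standard Fourier basis of $L^2(-1,1)$---is a clean and elementary route that avoids the Bari-type perturbation theorems often invoked in this setting (quadratic closeness to a reference orthonormal basis). The reflection/unfolding device is exactly the right observation: $\{e^{i\omega_n x}\}_{n\in\Z}$ is overcomplete in $L^2(0,1)$ but becomes an orthogonal basis once the second component supplies the missing half-interval. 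One small remark: your derivative test $h'(\lambda_n)=(1-q^2)\sinh\lambda_n\neq0$ establishes that each $\lambda_n$ is a \emph{simple zero} of the characteristic function, which gives geometric simplicity immediately; algebraic simplicity then follows a posteriori from the Riesz basis property itself (a Riesz basis of eigenvectors leaves no room for Jordan chains), so you need not argue it separately.
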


\begin{lemma}\label{Lem1.2}\cite{Xu}
Let $\A$ be defined by \dref{1.2} and let $q\neq 1$. Then the
adjoint operator $\A^\ast$ of $\A$ is given by
\begin{equation}\label{1.6}
\left\{
\begin{array}{l}
\A^\ast(v,h)=-(h,v^{\prime\prime}),\\
D(\A^\ast)=\big\{(v,h)\in H^2(0,1)\times
H^1(0,1)\;|\;v(0)=0,v^\prime (1)+qh(1)=0\big\},
\end{array}
\right.
\end{equation}
and $\sigma (\A^\ast)=\sigma (\A)$. The eigenvector $\Psi_n(x)$ of
$\A^\ast$ corresponding to $\overline{\lambda}_n$ is given by
\begin{equation}\label{1.7}
\Psi_n(x)=\left(\dfrac{\sinh \overline{\lambda}_n
x}{\overline{\lambda}_n},-\sinh \overline{\lambda}_n x\right),\;
\forall\; n\in\Z.
\end{equation}
\end{lemma}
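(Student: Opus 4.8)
The plan is to obtain $\A^\ast$ directly from the definition of the Hilbert-space adjoint by an integration-by-parts argument, then to read off $\sigma(\A^\ast)$ from the general identity $\sigma(\A^\ast)=\overline{\sigma(\A)}$ together with the conjugation symmetry of $\sigma(\A)$ already exhibited in Lemma~\ref{Lem1.1}, and finally to verify the formula \dref{1.7} for $\Psi_n$ by direct substitution.

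For the adjoint I would start from the defining relation: $(v,h)\in D(\A^\ast)$ with $\A^\ast(v,h)=(p,r)$ means, in the inner product of $\H$,
\[
\int_0^1 g'(x)\overline{v'(x)}\,dx+\int_0^1 f''(x)\overline{h(x)}\,dx=\int_0^1 f'(x)\overline{p'(x)}\,dx+\int_0^1 g(x)\overline{r(x)}\,dx
\]
for every $(f,g)\in D(\A)$, where $p\in H_E^1(0,1)$ and $r\in L^2(0,1)$. First I would specialize to the pairs $(0,g)$ with $g\in H_0^1(0,1)\subset D(\A)$: the identity becomes $\int_0^1 g'\overline{v'}=\int_0^1 g\overline{r}$, forcing $v''\in L^2(0,1)$, hence $v\in H^2(0,1)$ with $v(0)=0$, and $r=-v''$. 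Next I would specialize to the pairs $(f,0)$ with $f\in H^2(0,1)$, $f(0)=f'(1)=0$: putting $\phi=f'$ (which then runs over all $\phi\in H^1(0,1)$ with $\phi(1)=0$) the identity becomes $\int_0^1\phi'\overline{h}=\int_0^1\phi\overline{p'}$, from which --- first for $\phi$ of compact support, then for general such $\phi$ --- one deduces in turn that $h+p$ is a constant (whence $h\in H^1(0,1)$), that $h(0)=0$, and, using $p(0)=0$, that $p=-h$. With this regularity in hand, integration by parts in the full identity is legitimate; using $v(0)=g(0)=0$ and $h(0)=0$ all interior integrals cancel and, after substituting $f'(1)=qg(1)$, the residual boundary term reduces to $g(1)\big[\overline{v'(1)}+q\,\overline{h(1)}\big]=0$, which, since $g(1)$ runs over all of $\C$, forces $v'(1)+qh(1)=0$. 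Conversely, any $(v,h)\in H^2(0,1)\times H^1(0,1)$ with $v(0)=h(0)=0$ and $v'(1)+qh(1)=0$ makes the displayed identity hold with $(p,r)=-(h,v'')$, hence lies in $D(\A^\ast)$; this yields \dref{1.6}, the extra requirement $h(0)=0$ being implicit in $\A^\ast(v,h)\in\H$.

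For the spectrum I would invoke that $\A$ generates a $C_0$-group on $\H$, so it is densely defined and closed, whence $\A^\ast$ is closed and $\sigma(\A^\ast)=\overline{\sigma(\A)}=\{\overline{\lambda}:\lambda\in\sigma(\A)\}$; by \dref{1.3}--\dref{1.4} the set $\{\lambda_n\}$ is invariant under complex conjugation ($\overline{\lambda_n}=\lambda_{-n}$ if $q>1$, $\overline{\lambda_n}=\lambda_{-n-1}$ if $0<q<1$), so $\sigma(\A^\ast)=\sigma(\A)$. For the eigenfunctions I would substitute $(v,h)=\big(\sinh(\overline{\lambda}_n x)/\overline{\lambda}_n,\,-\sinh(\overline{\lambda}_n x)\big)$ into \dref{1.6}: since $\big(\sinh(\overline{\lambda}_n x)/\overline{\lambda}_n\big)''=\overline{\lambda}_n\sinh(\overline{\lambda}_n x)$, this gives $\A^\ast(v,h)=\big(\sinh(\overline{\lambda}_n x),\,-\overline{\lambda}_n\sinh(\overline{\lambda}_n x)\big)=\overline{\lambda}_n(v,h)$; moreover $(v,h)\in D(\A^\ast)$ because $\sinh 0=0$ and $v'(1)+qh(1)=\cosh\overline{\lambda}_n-q\sinh\overline{\lambda}_n=0$ is precisely the complex conjugate of the characteristic relation $\cosh\lambda_n=q\sinh\lambda_n$ defining $\lambda_n$ (cf. \dref{1.5} and Lemma~\ref{Lem1.1}). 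This establishes \dref{1.7}.

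The step I expect to be the main obstacle is the ``only if'' half of the domain characterization --- proving that membership in $D(\A^\ast)$ genuinely \emph{forces} the regularity $v\in H^2(0,1)$, $h\in H^1(0,1)$ and both boundary conditions. This has to be carried out entirely through the weak formulation, with the test pairs $(f,g)\in D(\A)$ chosen in the right order (compactly supported ones first, to gain the Sobolev regularity; then ones carrying prescribed boundary data, to extract $v'(1)+qh(1)=0$) \emph{before} any boundary term may be written down. Once the regularity is in place, the identification of $\A^\ast$, the description of $\sigma(\A^\ast)$, and the verification of the eigenfunctions are all routine.
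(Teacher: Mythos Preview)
Your proof is correct. Note, however, that the paper does not prove this lemma at all: it is quoted with the citation \cite{Xu} and no argument is given in the text. Your integration-by-parts derivation of $\A^\ast$, the use of $\sigma(\A^\ast)=\overline{\sigma(\A)}$ combined with the conjugation symmetry $\overline{\lambda_n}=\lambda_{-n}$ (resp.\ $\lambda_{-n-1}$) visible in \dref{1.3}--\dref{1.4}, and the direct verification of \dref{1.7} via the conjugated characteristic relation $\cosh\overline{\lambda}_n=q\sinh\overline{\lambda}_n$ are all sound and constitute a complete self-contained proof, which is more than the paper itself supplies.

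One small remark: you correctly observe that the derivation forces $h(0)=0$, a condition not listed explicitly in \dref{1.6}. This is not a defect in your argument but a mild imprecision in the stated domain: since $\A^\ast(v,h)=-(h,v'')$ must land in $\H=H_E^1(0,1)\times L^2(0,1)$, the requirement $h\in H_E^1(0,1)$ (hence $h(0)=0$) is implicit, exactly as you say. Your handling of the ``only if'' direction --- first gaining $v\in H^2$ and $r=-v''$ from test pairs $(0,g)$ with $g\in H_0^1$, then $h\in H^1$, $h(0)=0$, $p=-h$ from test pairs $(f,0)$ with $f'(1)=0$, and finally extracting $v'(1)+qh(1)=0$ from the residual boundary term once integration by parts is justified --- is the standard route and is carried out in the right order.
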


It is easy to verify that for any $n,m\in\Z$,
$\langle\Phi_n,\Psi_m\rangle=\delta_{nm}.$ System \dref{1.1} can be
written as the following evolutionary equation in $\H$:
\begin{equation}\label{1.8}
\dfrac{dX(t)}{dt}=\A X(t),\; t>0,\quad X(0)=(u_0,u_1),
\end{equation}
where $X(t)=(u(\cdot,t),u_t(\cdot,t))$, and the solution of
\dref{1.8} is given by
\begin{equation}\label{1.9}
X(t)=\sum_{n\in\Z}e^{\lambda_n t}\langle X(0),\Psi_n\rangle\Phi_n.
\end{equation}
Thus
\begin{equation}\label{1.11}
y(t)=\sum_{n\in\Z}e^{\lambda_n t}\langle X(0),\Psi_n\rangle+d(t).
\end{equation}

It can be seen from Lemma \ref{Lem1.1} that when $q=1$, the real
part of the eigenvalues is $+\infty$, while for $0<q\neq 1$, the
real part is  finite positive. Hence, we suppose   $1\notin Q$ as
usual (see, e.g., \cite{Bresch1,Krstic1}), where
$Q=[\underline{q},\overline{q}]$ is the prior parameter set.

We take  $q\in Q=(1,+\infty)$ as an example to illustrate how to
apply the algorithms proposed in   previous section to  simultaneous
identification for the anti-damping coefficient $q$ and initial
values. The following Corollaries \ref{Co2.1}-\ref{Co2.3} are the
direct consequences of Theorem \ref{Th1.1} and Theorem \ref{Th1.3},
respectively, by noticing that for system \dref{1.1}, the relevant
function and parameters now are
\begin{equation}\label{1.14}
f(q)=\dfrac{1}{2}\ln\dfrac{q+1}{q-1},\; \mu_n=n\pi,\; L=2,\;
\kappa_n=1.
\end{equation}

\begin{corollary}\label{Co2.1}
Suppose that $d(t)=0$ in system \dref{1.1}. Then  both the
coefficient $q$ and initial values $u_0(x)$ and $u_1(x)$ can be
uniquely determined by the output $y(t),t\in [0,T]$, where $T>4$.
Specifically, $q$ can be recovered exactly  from
\begin{equation}\label{1.12}
q=\dfrac{\|y\|_{L^2(T_1,T_2)}+\|y\|_{L^2(T_1-2,T_2-2)}}{\|y\|_{L^2(T_1,T_2)}-\|y\|_{L^2(T_1-2,T_2-2)}},\;
2\leq T_1<T_2-2,
\end{equation}
and the initial values $u_0(x)$ and $u_1(x)$ can be reconstructed
from
\begin{equation}\label{1.13}
u_0(x)=\dfrac{1}{2}\sum_{n\in\Z}\left(\int_0^2 y(t)e^{-\lambda_n
t}dt\right) \dfrac{\sinh \lambda_n x}{\lambda_n}, \;
u_1(x)=\dfrac{1}{2}\sum_{n\in\Z}\left(\int_0^2 y(t)e^{-\lambda_n
t}dt\right)\sinh \lambda_n x.
\end{equation}
\end{corollary}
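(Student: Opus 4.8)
The plan is to verify that system \dref{1.1} with $q\in Q=(1,+\infty)$ satisfies all three hypotheses of Theorem \ref{Th1.1}, so that \dref{1.1.5} and \dref{1.1.6} apply directly, and then to simplify the resulting formulas using the explicit data \dref{1.14}. First, from Lemma \ref{Lem1.1} the operator $\A$ has compact resolvent, its eigenvalues are $\lambda_n=\frac12\ln\frac{q+1}{q-1}+in\pi$, so expansion \dref{1.1.2} holds with $f(q)=\frac12\ln\frac{q+1}{q-1}$, $\mu_n=n\pi$ (independent of $q$), and one checks $f:(1,+\infty)\to(0,+\infty)$ is strictly decreasing, hence invertible, with inverse obtained by solving $e^{2f}=\frac{q+1}{q-1}$ for $q$, giving $q=\frac{e^{2f}+1}{e^{2f}-1}$. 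Condition \dref{1.1.3} holds with $L=2$: indeed $\frac{\mu_n L}{2\pi}=\frac{n\pi\cdot 2}{2\pi}=n\in\Z$. Second, Lemma \ref{Lem1.1} states $\{\Phi_n\}_{n\in\Z}$ is a Riesz basis for $\H$, which is hypothesis (ii). Third, from \dref{1.10} and \dref{1.5} we compute $\kappa_n=\mathcal{C}\Phi_n=\left.\frac{d}{dx}\frac{\sinh\lambda_n x}{\lambda_n}\right|_{x=0}=\cosh(0)=1$, so $\kappa_n\equiv 1$ and hypothesis (iii) holds with $\kappa=K=1$.

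With all hypotheses verified, Theorem \ref{Th1.1} applies: $q$ and $X(0)=(u_0,u_1)$ are uniquely determined by $y(t)$ on $[0,T]$ for $T>2L=4$. For the formula \dref{1.12}, I substitute $L=2$ into \dref{1.1.5} and \dref{1.1.9.3}: the latter gives $\|y\|_{L^2(T_1,T_2)}=e^{2f(q)}\|y\|_{L^2(T_1-2,T_2-2)}$, so $e^{2f(q)}=\|y\|_{L^2(T_1,T_2)}/\|y\|_{L^2(T_1-2,T_2-2)}$, and then the explicit inverse $q=\frac{e^{2f(q)}+1}{e^{2f(q)}-1}$ yields
\[
q=\frac{\|y\|_{L^2(T_1,T_2)}+\|y\|_{L^2(T_1-2,T_2-2)}}{\|y\|_{L^2(T_1,T_2)}-\|y\|_{L^2(T_1-2,T_2-2)}},
\]
valid for $2\le T_1<T_2-2$ by Theorem \ref{Th1.1} (and Remark \ref{Re1.2} allows the endpoint $T_1=2$, or one simply notes $\|y\|_{L^2(T_1,T_2)}\neq 0$ there). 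For \dref{1.13}, I substitute $L=2$, $\kappa_n=1$, and the two components of $\Phi_n$ from \dref{1.5} into \dref{1.1.6}, reading off the displacement component $\frac{\sinh\lambda_n x}{\lambda_n}$ and the velocity component $\sinh\lambda_n x$ separately; since $X(0)=(u_0,u_1)=\frac12\sum_{n\in\Z}\big(\int_0^2 y(t)e^{-\lambda_n t}dt\big)\Phi_n$, the two stated formulas follow componentwise.

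There is essentially no obstacle here — the corollary is, as the text says, a direct consequence of Theorem \ref{Th1.1}. The only points requiring a line of care are (a) confirming $f$ is invertible on $(1,+\infty)$ and computing its inverse in closed form (a one-line monotonicity argument plus solving a scalar equation), and (b) the arithmetic simplification turning $q=f^{-1}\!\big(\frac1L\ln(\text{ratio})\big)$ with $L=2$ into the clean rational expression \dref{1.12}. One should also note in passing that $\{\mu_n\}=\{n\pi\}_{n\in\Z}$ has the constant gap $\gamma=\pi$, so $2\pi/\gamma=2=L$, which is why the threshold $T_2-T_1>L$ in Theorem \ref{Th1.1} reads $T_2-T_1>2$ and the observation time threshold reads $T>4$; this is consistent with the Ingham step \dref{1.1.25}–\dref{1.1.26}. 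No disturbance appears since $d(t)=0$ by hypothesis, so Theorem \ref{Th1.3} is not needed for this corollary.
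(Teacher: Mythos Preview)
Your proposal is correct and follows exactly the approach the paper itself takes: the paper states that Corollary~\ref{Co2.1} is a ``direct consequence of Theorem~\ref{Th1.1}'' once one records the specific data \dref{1.14}, and you have supplied precisely the verification of hypotheses (i)--(iii) and the algebraic simplification of \dref{1.1.5}--\dref{1.1.6} that this entails. Your handling of the endpoint $T_1=2$ via Remark~\ref{Re1.2} is also appropriate, since the paper's statement of the corollary does allow $T_1=2$ while Theorem~\ref{Th1.1} literally requires $T_1>L$.
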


Notice that in \dref{1.13}, the observation interval $[0,2]$ is the
minimal time interval for observation to identify the initial values
for any identification algorithm.

\begin{corollary}\label{Co2.3}
Suppose that $q\in Q=(1,+\infty)$ in system \dref{1.1} and the
disturbance is bounded, {\it i.e.} $|d(t)|\leq M$ for some $M>0$ and
all $t\geq 0$. Then for any $T_2-2>T_1\geq 2$,
\begin{equation}\label{1.19}
\lim_{T_1\rightarrow +\infty}q_{T_1}=q,\;\;  \lim_{T_1\rightarrow
+\infty}\|(\hat{u}_{0T_1},\hat{u}_{1T_1})-(u_0,u_1)\|=0,
\end{equation}
where
\begin{equation}\label{1.20}
q_{T_1}=\dfrac{\|y\|_{L^2(T_1,T_2)}+\|y\|_{L^2(T_1-2,T_2-2)}}{\|y\|_{L^2(T_1,T_2)}-\|y\|_{L^2(T_1-2,T_2-2)}},
\end{equation}
and
\begin{equation}\label{1.20.2}
\begin{array}{l}
\hat{u}_{0T_1}(x)=\dfrac{1}{2}\sum_{n\in\Z}\left(\int_{T_1}^{T_1+2}
y(t)e^{-\lambda_n t}dt\right) \dfrac{\sinh \lambda_n x}{\lambda_n},\\
\hat{u}_{1T_1}(x)=\dfrac{1}{2}\sum_{n\in\Z}\left(\int_{T_1}^{T_1+2}
y(t)e^{-\lambda_n t}dt\right)\sinh \lambda_n x.
\end{array}
\end{equation}
\end{corollary}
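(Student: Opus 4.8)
The plan is to derive Corollary~\ref{Co2.3} as a direct specialization of Theorem~\ref{Th1.3} to the wave equation~\dref{1.1}, so that essentially no new analytical work is needed---only the verification that the hypotheses of the theorem hold and the substitution of the concrete data~\dref{1.14}. First I would check that system~\dref{1.1} with $q\in Q=(1,+\infty)$ fits the abstract framework~\dref{1.1.10}: by Lemma~\ref{Lem1.1} the operator $\A$ has compact resolvent with eigenvalues $\lambda_n=\frac12\ln\frac{q+1}{q-1}+in\pi$, so condition (i) of Theorem~\ref{Th1.1} holds with $f(q)=\frac12\ln\frac{q+1}{q-1}$, $\mu_n=n\pi$, and $L=2$, since $\mu_n L/(2\pi)=n\in\Z$; the map $f$ is a bijection from $(1,\infty)$ onto $(0,\infty)$ and its inverse $q=\coth(f)=\frac{e^{2f}+1}{e^{2f}-1}=\frac{e^{f}+e^{-f}}{e^{f}-e^{-f}}$ is continuous, which is exactly the extra hypothesis needed in Theorem~\ref{Th1.3}. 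Condition (ii) is the Riesz basis statement in Lemma~\ref{Lem1.1}, and condition (iii) holds trivially with $\kappa=K=1$ because $\mathcal{C}\Phi_n=\kappa_n=1$ for all $n$ (a short computation from~\dref{1.5} and~\dref{1.10}). Anti-stability follows from $f(q)>0$ for $q>1$.

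Next I would simply invoke Theorem~\ref{Th1.3}. The formula~\dref{1.1.20} becomes $q_{T_1}=f^{-1}\!\left(\frac12\ln\frac{\|y\|_{L^2(T_1,T_2)}}{\|y\|_{L^2(T_1-2,T_2-2)}}\right)$; applying $f^{-1}(s)=\coth s=\frac{e^{s}+e^{-s}}{e^{s}-e^{-s}}$ with $s=\frac12\ln r$, where $r=\|y\|_{L^2(T_1,T_2)}/\|y\|_{L^2(T_1-2,T_2-2)}$, gives $e^{s}=\sqrt r$, hence $q_{T_1}=\frac{\sqrt r+1/\sqrt r}{\sqrt r-1/\sqrt r}=\frac{r+1}{r-1}$, which upon clearing the denominator is precisely~\dref{1.20}. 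The convergence $\lim_{T_1\to\infty}q_{T_1}=q$ and $\lim_{T_1\to\infty}\|(\hat u_{0T_1},\hat u_{1T_1})-(u_0,u_1)\|=0$ is then read off from~\dref{1.1.19}. For the reconstruction formulas~\dref{1.20.2}, I would specialize~\dref{1.3.20}: since the biorthogonal eigenvectors are $\Phi_n(x)=\bigl(\frac{\sinh\lambda_n x}{\lambda_n},\sinh\lambda_n x\bigr)$ and $\kappa_n=1$, the abstract expansion $\hat x_{0T_1}=\frac1L\sum_n\frac1{\kappa_n}\bigl(\int_{T_1}^{T_1+L}y(t)e^{-\lambda_n t}dt\bigr)\Phi_n$ with $L=2$ splits into its two components, giving exactly the stated $\hat u_{0T_1}$ and $\hat u_{1T_1}$.

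Since this is a corollary, there is no genuine obstacle; the only points requiring a modicum of care are (a) confirming that $\mathcal{C}$ is admissible and that $\mathcal{C}\Phi_n\equiv1$, which also needs the fact (used in Theorem~\ref{Th1.1} via \cite{Guo1,Guo1.1}) that uniform boundedness of $|\mathcal{C}\Phi_n|$ together with the Riesz basis property yields admissibility---so I would note the output lies in $L^2_{loc}(0,\infty)$ and the series in~\dref{1.11} is meaningful; (b) the algebraic simplification turning $f^{-1}$ applied to the log-ratio into the rational expression~\dref{1.20}, which I carried out above; and (c) observing that the condition $T_2-2>T_1\geq 2$ here corresponds to $T_2-L>T_1>L$ in Theorem~\ref{Th1.3} with $L=2$ (the endpoint $T_1=2$ being admissible in practice by Remark~\ref{Re1.2}). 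I would close the proof with one sentence: ``Corollary~\ref{Co2.3} now follows immediately from Theorem~\ref{Th1.3}.''
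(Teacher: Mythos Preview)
Your proposal is correct and matches the paper's approach exactly: the paper states that Corollary~\ref{Co2.3} is a ``direct consequence'' of Theorem~\ref{Th1.3} once one plugs in the data~\dref{1.14}, namely $f(q)=\frac12\ln\frac{q+1}{q-1}$, $\mu_n=n\pi$, $L=2$, $\kappa_n=1$. Your explicit verification of hypotheses (i)--(iii), of anti-stability, of continuity of $f^{-1}$, and your algebraic reduction of $f^{-1}\bigl(\tfrac12\ln r\bigr)=\tfrac{r+1}{r-1}$ to obtain~\dref{1.20} are all correct and in fact spell out more detail than the paper provides.
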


To end this section, we present some numerical simulations for
system \dref{1.1} to illustrate the performance of the algorithm.

\begin{example}\label{Ex2.1}
{\it The observation with  random noises when system \dref{1.1.10}
is stable.}
\end{example}
A simple spectral analysis together with Theorem \ref{Th1.1} shows
that Corollary \ref{Co2.1} is also valid for $q\in Q=(-\infty,-1)$.
In this example, the damping coefficient $q$ and initial values
$u_0(x),u_1(x)$ are chosen as
\begin{equation}\label{1.21}
q=-3,\quad u_0(x)=-3\sin\pi x,\quad u_1(x)=\pi\cos\pi x.
\end{equation}
In this case, the output can be obtained from \dref{1.11} (with
$d(t)=0$), where the infinite series is approximated by a finite
one, that is, $\{n\in\Z\}$ is replaced by $\left\{n\in\Z\;|-5000\leq
n\leq 5000\right\}$. Some random noises are added to the measurement
data and we use these data to test the algorithm proposed in
Corollary \ref{Co2.1}.

Let $T_1=2,T_2=2.5$. Then  the damping coefficient $q$ can be
recovered from \dref{1.12}, and the initial values $u_0(x)$ and
$u_1(x)$ can be reconstructed from \dref{1.13}. Table \ref{Tab1}
lists the numerical results for the damping coefficients (the second
column in Table \ref{Tab1}) and Figure \ref{Fig-0.1}-\ref{Fig-0.3}
for the initial values in various cases of noise levels. In Table
\ref{Tab1}, the absolute errors of the real damping coefficient and
the recovered ones, and the $L^2$-norm of the differences between
the exact initial values and the reconstructed ones are also shown.

It is worth pointing out that in reconstruction of the initial
values from \dref{1.13}, the infinite series is approximated by a
finite one once again, that is, $\{n\in\Z\}$ is replaced by
$\left\{n\in\Z\;|\;|n|\leq 1000\right\}$, which accounts for the
zero value of the reconstructed initial velocity at the left end.
This is also the reason that the errors of the initial velocity (the
last column in Table \ref{Tab1}) are relatively large even if there
is no random noise in the measured data.

\begin{table}[H]
\caption{Absolute errors with different noise levels} \label{Tab1}
\begin{center}
\begin{tabular}{c c c c c}
\toprule
Noise Level & Recovered $q$ & Errors for $q$ & Errors for $u_0(x)$ & Errors for $u_1(x)$ \\
\midrule
0  &-3.0000&9.3259E-15&1.1744E-08&2.2215E-01\\
1\%&-2.9994&6.2498E-04&1.1618E-03&2.2748E-01\\
3\%&-2.9979&2.0904E-03&3.5604E-03&2.6662E-01\\
\bottomrule
\end{tabular}
\end{center}
\end{table}

\begin{figure}[H]
 \centering
\subfigure[without random noise]{
 \label{Fig-0.1}
 \includegraphics[scale=0.36]{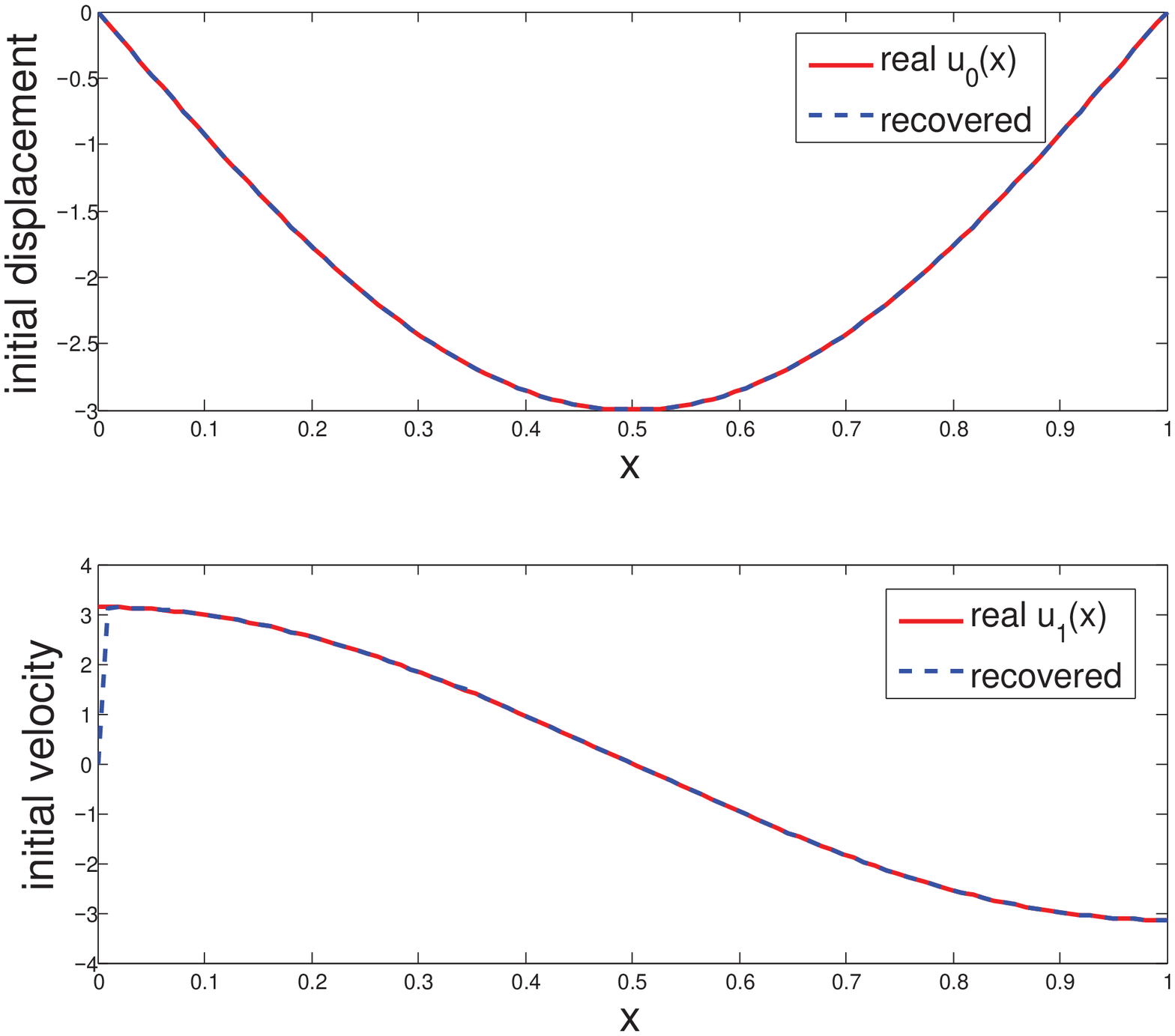}}
 \subfigure[with 1\% random error]{
  \label{Fig-0.2}
 \includegraphics[scale=0.36]{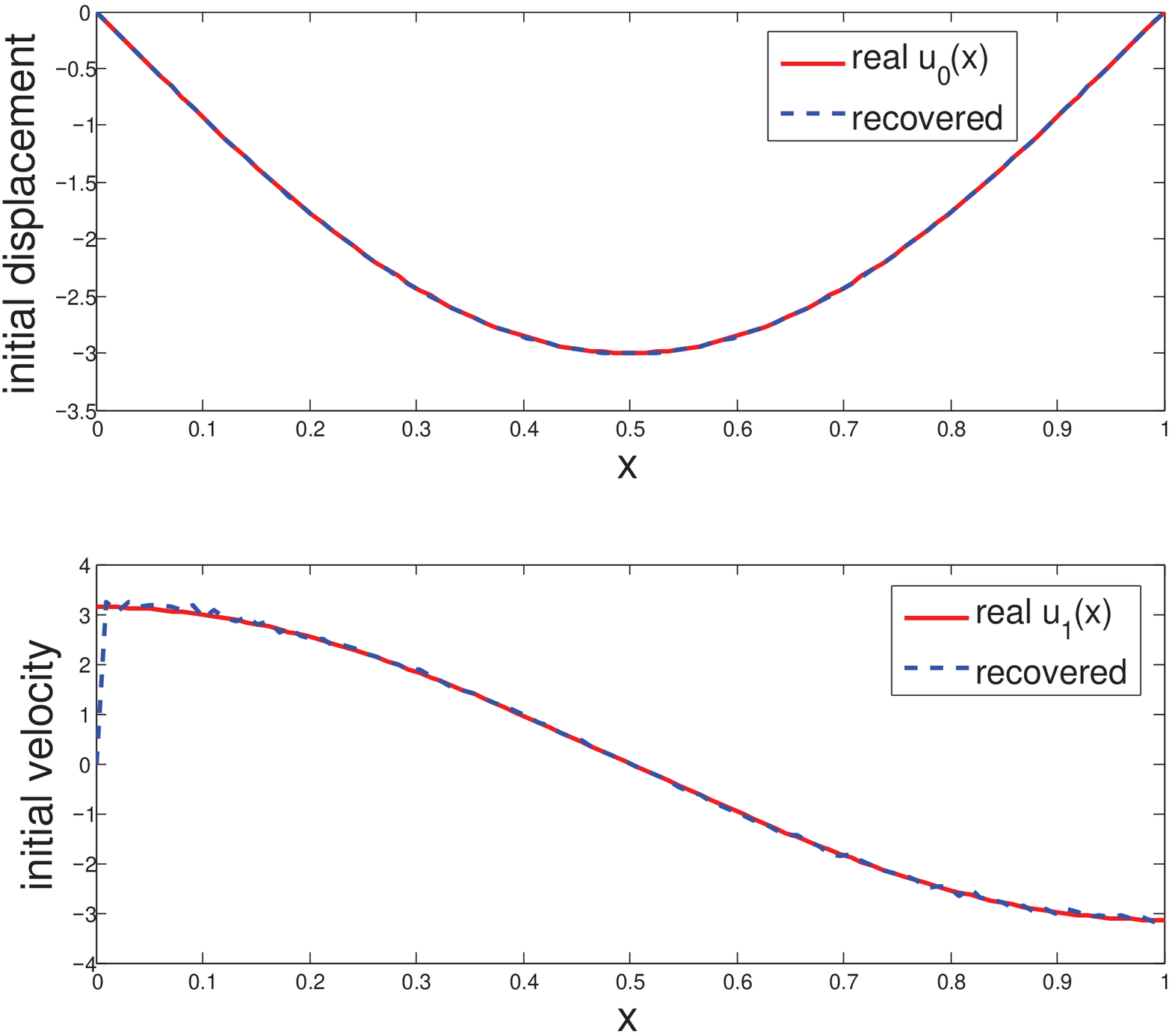}}
 \subfigure[with 3\% random error]{
  \label{Fig-0.3}
 \includegraphics[scale=0.36]{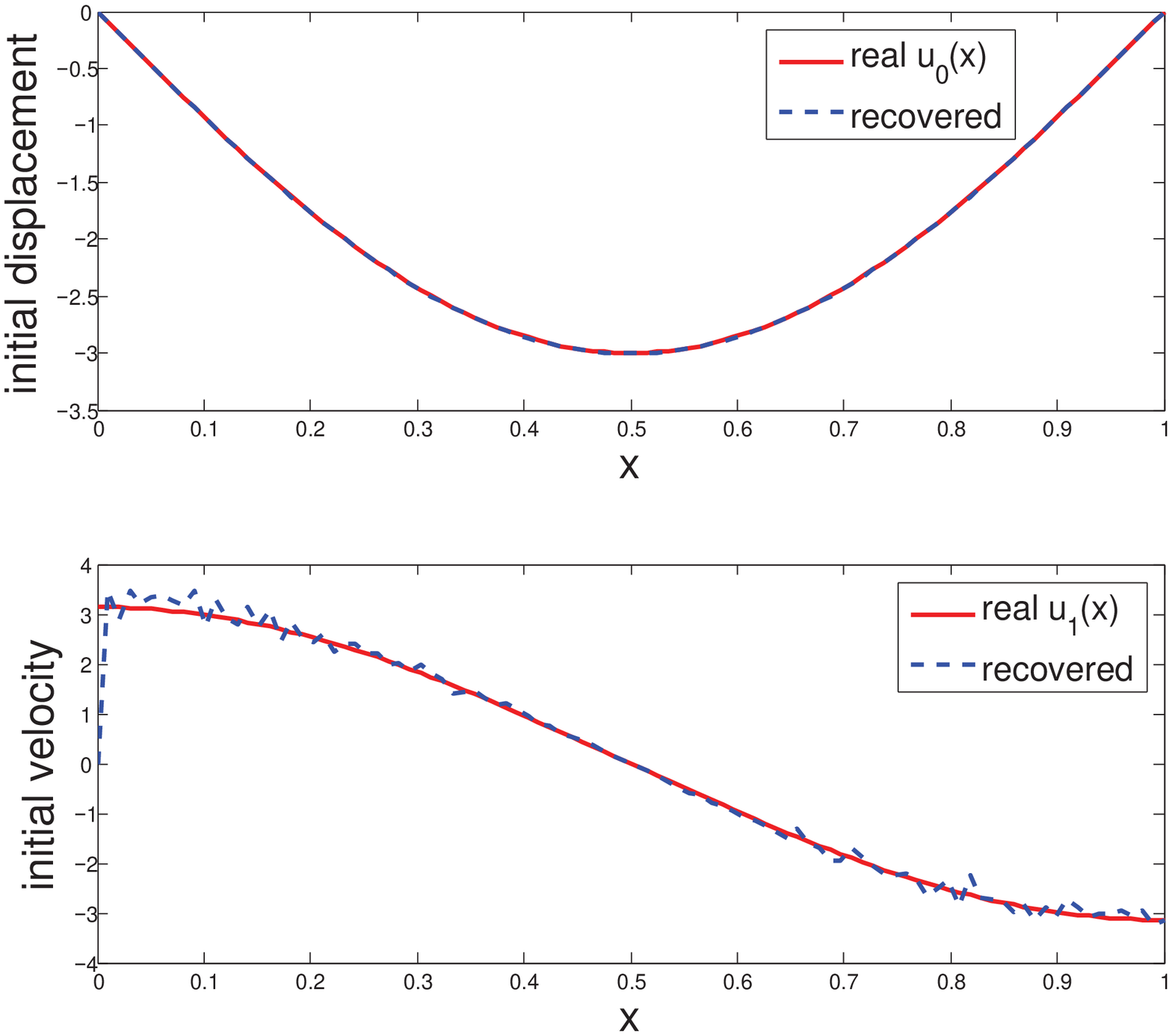}}
\caption{The initial values: initial displacement (upper) and
initial velocity (lower)} \label{Fig-3}
 \end{figure}

\begin{example}\label{Ex2.3} {\it The observation with general bounded disturbance when system
\dref{1.1.10} is anti-stable.} \end{example}

The anti-damping coefficient and initial values are chosen as
\begin{equation}\label{1.21}
q=3,\quad u_0(x)=3\sin\pi x,\quad u_1(x)=\pi\cos\pi x.
\end{equation}
and the observation is corrupted by the bounded disturbance:
\begin{equation}\label{1.24}
d(t)=2\sin\dfrac{1}{1+t}+3\cos 10t.
\end{equation}
The relevant parameters in Corollary \ref{Co2.3} are chosen to be
$T_2=T_1+3$, and let $T_1$ be different values increasing from $2$
to $10$. The corresponding anti-damping coefficients $q_{T_1}$
recovered from \dref{1.20} are depicted in Figure \ref{Fig-2}. It is
seen that $q_{T_1}$ converges to the real value $q=3$ as $T_1$
increases. Setting $T_1=0,\;3,\;7$ in \dref{1.20.2} and
reconstructing the initial values produce results in Figure
\ref{Fig-2} from which we can see that the reconstructed initial
values become closer to the real ones as $T_1$ increases.

\begin{figure}[H]
\centering
\includegraphics[width=3.2in]{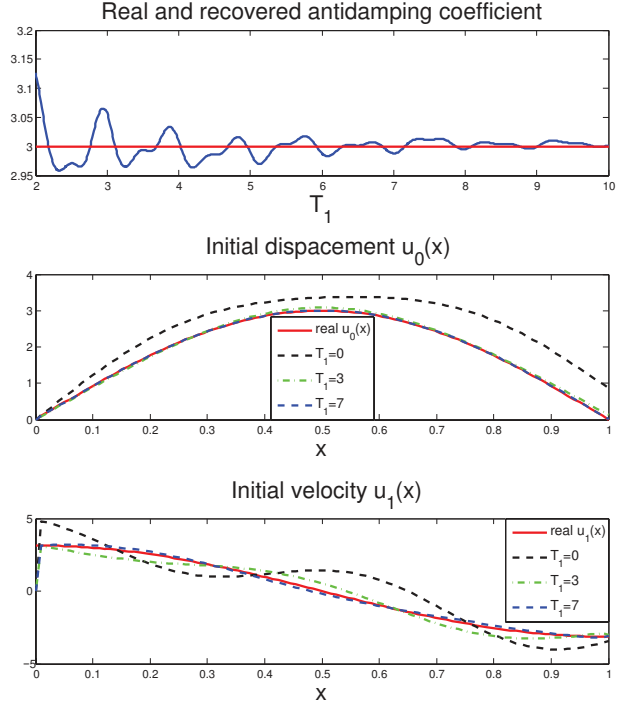}
\caption{anti-damping coefficient $q$ and initial values $u_0,\;
u_1$}\label{Fig-2}
\end{figure}

\section{Application to Schr\"odinger equation}\label{Sec3}

In this section, we consider a quantum system described by the
following Schr\"odinger equation:
\begin{equation}\label{3.1}
\left\{\begin{array}{ll} u_{t}=-i u_{xx}+qu,&0<x<1,\; t>0,\\
u_x(0,t)=0,\;\; u(1,t)=0,&t\geq 0,\\
y(t)=u(0,t)+d(t),& t\geq 0,\\
u(x,0)=u_0(x),&0\leq x\leq 1.
\end{array}\right.
\end{equation}
where $u(x,t)$ is the complex-valued state, $i$ is the imaginary
unit, and the potential $q>0$ and $u_0(x)$ are the unknown
anti-damping coefficient and initial value, respectively.

Let $\H=L^2(0,1)$ be equipped with the usual inner product
$\langle\cdot,\cdot\rangle$ and the inner product induced norm
$\|\cdot\|.$ Introduce the operator $\A$ defined by
\begin{equation}\label{3.2}
\left\{
\begin{array}{l}
\A\phi=-i\phi^{\prime\prime}+q\phi,\\
D(\A)=\left\{\phi\in H^2(0,1)\;|\; \phi^\prime
(0)=\phi(1)=0\right\}.
\end{array}
\right.
\end{equation}
A straightforward verification shows that such defined $\A$
generates a $C_0$-semigroup on $\H$.
\begin{lemma}\label{Le3.1}\cite{Krstic2}
Let $\A$ be defined by \dref{3.2}. Then the spectrum of $\A$
consists of all isolated eigenvalues  given by
\begin{equation}\label{3.3}
\lambda_n=q+i\left(n-\dfrac{1}{2}\right)^2\pi^2,\quad n\in\N,
\end{equation}
and the corresponding eigenfunctions  $\phi_n(x)$  are given by
\begin{equation}\label{3.4}
\phi_n(x)=\sqrt{2}\cos\left(n-\dfrac{1}{2}\right)\pi x,\quad n\in\N.
\end{equation}
In addition, $\{\phi_n(x)\}_{n\in\N}$ forms an orthonormal basis for
$\H$.
\end{lemma}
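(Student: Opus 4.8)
The plan is to diagonalise $\A$ explicitly by solving the eigenvalue equation and then to invoke a compactness argument to exclude any other spectrum. First I would look for $\lambda\in\C$ and $0\neq\phi\in D(\A)$ with $\A\phi=\lambda\phi$, that is, $-i\phi''+q\phi=\lambda\phi$ on $(0,1)$, which rearranges to $\phi''=i(\lambda-q)\phi$ subject to $\phi'(0)=0$ and $\phi(1)=0$. Writing the general solution of this constant-coefficient ODE and imposing $\phi'(0)=0$ forces $\phi(x)=c\cos(\beta x)$ for a constant $c$ and a complex number $\beta$ with $-\beta^2=i(\lambda-q)$, equivalently $\lambda=q+i\beta^2$; the degenerate case $\lambda=q$ leads only to $\phi\equiv 0$. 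The remaining boundary condition $\phi(1)=0$ then reads $\cos\beta=0$, whose solutions modulo sign (which does not affect $\cos(\beta x)$) are $\beta=(n-\tfrac12)\pi$, $n\in\N$. This produces exactly the eigenvalues $\lambda_n=q+i(n-\tfrac12)^2\pi^2$ and, after normalisation in $L^2(0,1)$, the eigenfunctions $\phi_n(x)=\sqrt2\cos(n-\tfrac12)\pi x$.

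Next I would argue that $\sigma(\A)$ contains nothing beyond these eigenvalues. Since $D(\A)$ is continuously embedded in $H^2(0,1)$ and $H^2(0,1)$ embeds compactly into $L^2(0,1)$ by the Rellich theorem, and since $\A$ is closed with nonempty resolvent set (it generates a $C_0$-semigroup, so $\rho(\A)$ contains a right half-plane), the operator $\A$ has compact resolvent. Hence its spectrum is discrete and consists solely of eigenvalues of finite multiplicity, and combined with the computation above we get $\sigma(\A)=\{\lambda_n\}_{n\in\N}$ with each $\lambda_n$ isolated.

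Finally, for the basis property, orthonormality of $\{\phi_n\}$ in $L^2(0,1)$ is a one-line computation, $\int_0^1 2\cos(n-\tfrac12)\pi x\,\cos(m-\tfrac12)\pi x\,dx=\delta_{nm}$, via product-to-sum formulas. Completeness is the classical fact that $\{\sqrt2\cos(n-\tfrac12)\pi x\}_{n\in\N}$ is an orthonormal basis of $L^2(0,1)$: it is the eigensystem of $-d^2/dx^2$ with a Neumann condition at $x=0$ and a Dirichlet condition at $x=1$, and it can be obtained from the standard exponential basis $\{e^{ik\pi x}\}_{k\in\Z}$ of $L^2(-1,1)$ by an even reflection argument; I would either cite this or deduce it from that Fourier basis. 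I expect the only genuinely delicate point to be this completeness step, precisely because $\A$ is not self-adjoint — the factor $-i$ prevents a direct appeal to the spectral theorem; however, the orthonormal system in question does not depend on $q$ or on the $-i$, so the classical scalar argument applies verbatim and the difficulty is only apparent.
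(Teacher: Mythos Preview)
Your argument is correct. The paper itself does not give a proof of this lemma; it is quoted from \cite{Krstic2} and stated without proof, so there is no ``paper's own proof'' to compare against.

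Your route is the natural one and is essentially what lies behind the cited result: reduce $\A=-i\,d^2/dx^2+qI$ to the self-adjoint operator $B=-d^2/dx^2$ with Neumann--Dirichlet boundary conditions via $\A=-iB+qI$, so that $\A$ and $B$ share eigenfunctions and $\sigma(\A)=q+i\,\sigma(B)$. Your explicit ODE computation recovers exactly this, and your observation that the completeness question for $\{\phi_n\}$ reduces to the classical Neumann--Dirichlet cosine basis of $L^2(0,1)$ is precisely the right way to sidestep the non-self-adjointness of $\A$. The compact-resolvent step is also fine; alternatively, once you know $\{\phi_n\}$ is an orthonormal basis of $\H$ you can read off directly that $\sigma(\A)=\{\lambda_n\}$ without a separate Rellich argument.
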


System \dref{3.1} can be rewritten as the following evolutionary
equation in $\H$:
\begin{equation}\label{3.5}
\dfrac{dX(t)}{dt}=\A X(t),\; t>0,\quad X(0)=u_0,
\end{equation}
and  the solution of \dref{3.5} is given by
\begin{equation}\label{3.6}
X(t)=\sum_{n\in\N}e^{\lambda_n t}\langle X(0),\phi_n \rangle\phi_n.
\end{equation}
Thus
\begin{equation}\label{3.7}
y(t)=\sqrt{2}\sum_{n\in\N}e^{\lambda_n t}\langle
u_0,\phi_n\rangle+d(t).
\end{equation}

The relevant function and parameters in Theorems
\ref{Th1.1}-\ref{Th1.3} for system \dref{3.1} are
\begin{equation*}\label{3.8}
f(q)=q,\; \mu_n=\left(n-\dfrac{1}{2}\right)^2\pi^2,\;
L=\dfrac{8}{\pi},\; \kappa_n=\sqrt{2}.
\end{equation*}
Parallel to Section \ref{Sec2}, we have two corollaries
corresponding to the exact observation and observation with general
bounded disturbance, respectively, for system \dref{3.1}. Here we
only list the latter one  and the former is omitted.

\begin{corollary}\label{Co3.3}
Suppose that  $q\in Q=(0,+\infty)$ in system \dref{3.1} and the
disturbance is bounded, {\it i.e.} $|d(t)|\leq M$ for some $M>0$ and
all $t\geq 0$. Then for any $T_2-\frac{8}{\pi}>T_1>\frac{8}{\pi}$,
\begin{equation}\label{3.15}
\lim_{T_1\rightarrow +\infty}q_{T_1}=q,\quad \lim_{T_1\rightarrow
+\infty}\|\hat{u}_{0T_1}-u_0\|=0,
\end{equation}
where
\begin{equation}\label{3.16}
q_{T_1}=\dfrac{\pi}{8}\ln\dfrac{\|y\|_{L^2(T_1,T_2)}}{\|y\|_{L^2(T_1-\frac{8}{\pi},T_2-\frac{8}{\pi})}},\;
\frac{8}{\pi}<T_1<T_2-\frac{8}{\pi},
\end{equation}
and
\begin{equation}\label{3.16.2}
\hat{u}_{0T_1}(x)=\dfrac{\pi}{8}\sum_{n\in\Z}\left(\int_{T_1}^{T_1+\frac{8}{\pi}}
y(t)e^{-\lambda_n t}dt\right)\cos\left(n-\dfrac{1}{2}\right)\pi x.
\end{equation}
\end{corollary}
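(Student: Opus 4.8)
The plan is to recognize Corollary \ref{Co3.3} as a direct application of Theorem \ref{Th1.3} to the Schr\"odinger system \dref{3.1}, and therefore the proof should consist mainly of verifying that the hypotheses of Theorem \ref{Th1.1} and Theorem \ref{Th1.3} hold, then substituting the specific data $f(q)=q$, $\mu_n=(n-\frac12)^2\pi^2$, $L=8/\pi$, $\kappa_n=\sqrt2$ into formulas \dref{1.1.20} and \dref{1.3.20}. First I would note, using Lemma \ref{Le3.1}, that $\A$ has compact resolvent with simple eigenvalues $\lambda_n=q+i\mu_n$ where $\mu_n=(n-\frac12)^2\pi^2$ is strictly increasing in $n\in\N$ and independent of $q$, and that $f(q)=q$ is trivially invertible (and its inverse is continuous) on $Q=(0,+\infty)$; this is condition (i) of Theorem \ref{Th1.1}, including the quantization requirement \dref{1.1.3}.

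The key arithmetic step is checking \dref{1.1.3}: with $L=8/\pi$ we have $\frac{\mu_n L}{2\pi}=\frac{(n-\frac12)^2\pi^2\cdot 8/\pi}{2\pi}=4(n-\tfrac12)^2=(2n-1)^2\in\Z$ for all $n\in\N$, so the period is indeed $L=8/\pi$. Next, condition (ii) holds because Lemma \ref{Le3.1} asserts $\{\phi_n\}$ is an orthonormal basis of $\H=L^2(0,1)$, hence a Riesz basis. For condition (iii), the observation operator is $Cu=u(0)$, so $\kappa_n=C\phi_n=\sqrt2\cos 0=\sqrt2$, which is bounded above and below by positive constants (here $\kappa=K=\sqrt2$); consequently $C$ is admissible and system \dref{3.1} is exactly observable on any interval of length exceeding $L=8/\pi$. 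Finally, since $q>0$ the real part of every eigenvalue is positive, so system \dref{3.1} is anti-stable, which is the extra hypothesis needed for Theorem \ref{Th1.3}.

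With all hypotheses verified, Theorem \ref{Th1.3} applies directly: the convergence statements \dref{1.1.19} become \dref{3.15}, the estimator \dref{1.1.20} becomes \dref{3.16} after substituting $f^{-1}(s)=s$ and $L=8/\pi$ (so $q_{T_1}=\frac{1}{L}\ln\frac{\|y\|_{L^2(T_1,T_2)}}{\|y\|_{L^2(T_1-L,T_2-L)}}=\frac{\pi}{8}\ln\frac{\|y\|_{L^2(T_1,T_2)}}{\|y\|_{L^2(T_1-8/\pi,T_2-8/\pi)}}$), and the initial-value reconstruction \dref{1.3.20} becomes \dref{3.16.2} after substituting $\frac{1}{\kappa_n}=\frac{1}{\sqrt2}$, $\phi_n(x)=\sqrt2\cos(n-\frac12)\pi x$, and $L=8/\pi$, which collapses the coefficient $\frac{1}{L\kappa_n}\phi_n(x)$ to $\frac{\pi}{8}\cos(n-\frac12)\pi x$. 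I do not anticipate a genuine obstacle here; the only point requiring a moment's care is the verification of the quantization identity \dref{1.1.3} with the correct value $L=8/\pi$ (one must not mistakenly take $\mu_n=(n-\frac12)^2\pi^2$ for an arithmetic progression — it is not — but the Ingham gap condition \dref{1.1.25} still holds because $\mu_{n+1}-\mu_n=(2n)\pi^2\ge\pi^2>0$ and, more importantly, \dref{1.1.3} forces the exact period), and the bookkeeping of constants when specializing the error bounds \dref{1.1.20.1}–\dref{1.1.21}, which the statement of the corollary does not even require one to reproduce.
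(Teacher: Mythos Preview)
Your proposal is correct and matches the paper's approach exactly: the paper also treats Corollary~\ref{Co3.3} as an immediate specialization of Theorem~\ref{Th1.3}, merely recording the data $f(q)=q$, $\mu_n=(n-\tfrac12)^2\pi^2$, $L=8/\pi$, $\kappa_n=\sqrt2$ without further argument. Your explicit verification of \dref{1.1.3} via $\mu_nL/(2\pi)=(2n-1)^2\in\Z$ and of the constant $\kappa_n=\sqrt2$ is precisely the content the paper leaves implicit.
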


We also give a numerical simulation to test the algorithm proposed
in Corollary \ref{Co3.3} for  system \dref{3.1}, where the
anti-damping coefficient $q$ and initial value $u_0(x)$ are chosen
as
\begin{equation}\label{3.17}
q=0.7,\quad u_0(x)=\sin\pi x+i\cos\pi x,
\end{equation}
and the observation is corrupted by the disturbance
\begin{equation}\label{3.18}
d(t)=2\sin\dfrac{t}{10+t}+3i\cos 20t.
\end{equation}
The  observation can be obtained from \dref{3.7} by a finite series
approximation, that is, $\{n\in\N\}$ is replaced by
$\left\{n\in\N\;|\; n\leq 5000\right\}$. The relevant parameters in
Corollary \ref{Co3.3} are chosen to be $T_2=T_1+1$, and  $T_1$
increasing from $2.55$ to $10$. The corresponding anti-damping
coefficients $q_{T_1}$ recovered from \dref{3.16} are shown in
Figure \ref{Fig-3}. It is obvious that $q_{T_1}$ is convergent to
the real value  $q=0.7$ as $T_1$ increases. Setting $T_1=0,\;3,\;7$,
respectively, in \dref{3.16.2}, the  reconstructed initial values
are shown in Figure \ref{Fig-3} from which it is seen that the
errors between the reconstructed initial values and the real ones
become smaller as $T_1$ increases.

\begin{figure}[H]
\centering
\includegraphics[width=3.2in]{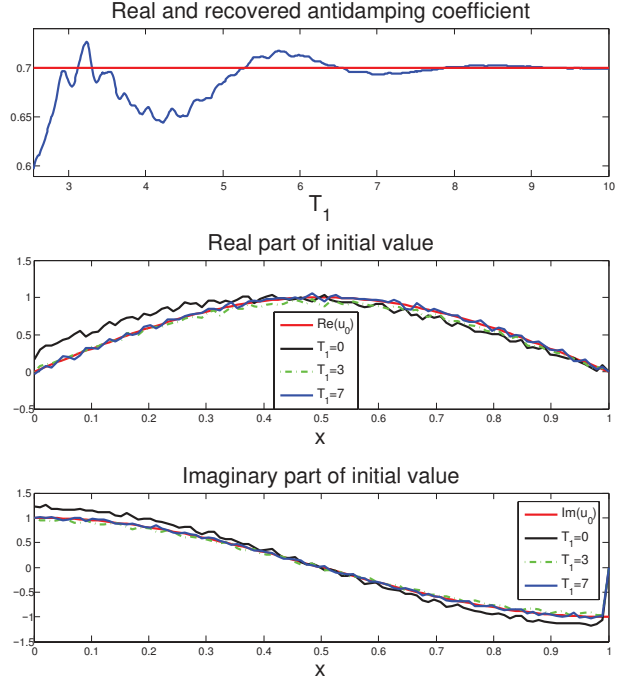}
\caption{coefficient (upper), real part (middle) and imaginary part
(lower) of initial value $u_0(x)$}\label{Fig-3}
\end{figure}

\section{Application to coupled strings equation}\label{Sec4}

In this section, we consider the following two connected anti-stable
strings with joint anti-damping  described by
\begin{equation}\label{4.1}
\left\{\begin{array}{l} u_{tt}(x,t)=u_{xx}(x,t),x\in
(0,\frac{1}{2})\cup (\frac{1}{2},1),\; t>0,\\
u\left(\frac{1}{2}^-,t\right)=u\left(\frac{1}{2}^+,t\right),\;t\geq
0,\\
u_x\left(\frac{1}{2}^-,t\right)-u_x\left(\frac{1}{2}^+,t\right)=qu_t(1,t),\; t\geq 0,\\
u(0,t)=u_x(1,t)=0,\; t\geq 0,\\
u(x,0)=u_0(x),u_t(x,0)=u_1(x),\; 0\leq x\leq 1,\\
y(t)=u_x(0,t)+d(t),\; t\geq 0,
\end{array}\right.
\end{equation}
where $q>0,q\neq 2$ is the unknown anti-damping constant. System
\dref{4.1} models two connected strings with joint vertical force
anti-damping, see \cite{Guo,Guo2,Xu2} for more details.

Let $\H=H_E^1(0,1)\times L^2(0,1)$ be equipped with the inner
product $\langle\cdot,\cdot\rangle$ and its induced norm
$$\|(u,v)\|^2=\disp\int_0^1\left[|u^\prime(x)|^2+|v(x)|^2\right]dx,$$
where $H_E^1(0,1)=\left\{u|\;u\in H^1(0,1),u(0)=0\right\}$. Then
system \dref{4.1} can be rewritten as an evolutionary equation in
$\H$ as follows:
\begin{equation}\label{4.2}
\dfrac{d}{dt}X(t)=\A X(t),
\end{equation}
where $X(t)=(u(\cdot,t),u_t(\cdot,t))\in\H$ and $\A$ is defined by
\begin{equation}\label{4.3}
\A (u,v)=(v(x),u^{\prime\prime}(x)),
\end{equation}
with the domain
\begin{equation}\label{4.4}
\begin{array}{l}
D(\A)=\left\{(u,v)\in H^1(0,1)\times
H_E^1(0,1)\Bigg|\begin{array}{l}
u(0)=u^\prime (1)=0,\;u|_{[0,\frac{1}{2}]}\in H^2(0,\frac{1}{2}),\\
u|_{[\frac{1}{2},1]}\in
H^2(\frac{1}{2},1),\;u^\prime(\frac{1}{2}^-)-u^\prime(\frac{1}{2}^+)=qv(\frac{1}{2}),
\end{array}\right\},
\end{array}
\end{equation}
where $u|_{[a,b]}$ denotes the function $u(x)$ confined to $[a,b]$.

We assume without loss of generality that  the prior parameter set
for $q$ is  $Q=(2,+\infty)$  since the case for $Q=(0,2)$ is very
similar.

\begin{lemma}\label{Lem4.1}\cite{Xu2}
Let $\A$ be defined by \dref{4.3}-\dref{4.4} and $q\in
Q=(2,+\infty)$. Then $\A^{-1}$ is compact on $\H$ and the
eigenvalues of $\A$ are  algebraically simple and separated,   given
by
\begin{equation}\label{4.5}
\lambda_n=\dfrac{1}{2}\ln\dfrac{q+2}{q-2}+in\pi,\; n\in\Z.
\end{equation}
The corresponding eigenfunctions  $\Phi_n(x)$ are  given by
\begin{equation}\label{4.6}
\Phi_n(x)=(\phi_n(x),\lambda_n\phi_n(x)),\; \forall\; n\in\Z,
\end{equation}
where
\begin{equation*}\label{4.7}
\phi_n(x)=\left\{\begin{array}{ll}
\dfrac{\sqrt{2}}{\lambda_n}\cosh\dfrac{\lambda_n}{2}\sinh \lambda_n
x,& 0<x<\dfrac{1}{2},\crr\disp
\dfrac{\sqrt{2}}{\lambda_n}\sinh\dfrac{\lambda_n}{2}\cosh\lambda_n
(1-x),& \dfrac{1}{2}<x<1.
\end{array}\right.
\end{equation*}
and $\{\Phi_n(x)\}_{n\in\Z}$ forms a Riesz basis for $\H$. In
addition, $\A$ generates a $C_0$-semigroup on $\H$.
\end{lemma}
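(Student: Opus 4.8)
The plan is to follow the classical spectral route for a one–dimensional wave operator with an interior coupling point, in four stages: (a) invertibility and compactness of the resolvent; (b) explicit determination of the eigenvalues and eigenfunctions; (c) algebraic simplicity and separation of the eigenvalues; and (d) the Riesz basis property, from which the $C_0$-semigroup generation follows immediately.

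For (a), given $(f,g)\in\H$ I would solve $\A(u,v)=(f,g)$, which forces $v=f$ and $u''=g$ on $(0,\frac12)$ and on $(\frac12,1)$. Thus $u$ is recovered by integrating $g$ twice on each half–interval, the four constants of integration being fixed by $u(0)=0$, $u'(1)=0$, continuity of $u$ at $x=\frac12$, and the jump relation $u'(\frac12^-)-u'(\frac12^+)=qv(\frac12)$; the resulting $4\times4$ linear system is nonsingular (its homogeneous version admits only the trivial solution), so $0\in\rho(\A)$ and $\A^{-1}$ maps $\H$ into $\{(u,v):u|_{[0,\frac12]}\in H^2,\ u|_{[\frac12,1]}\in H^2,\ v\in H^1_E(0,1)\}$, a space compactly embedded in $\H$. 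Hence $\A^{-1}$ is compact and $\sigma(\A)$ consists of isolated eigenvalues of finite algebraic multiplicity.

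For (b), from $\A\Phi=\lambda\Phi$ with $\Phi=(u,v)$ and $\lambda\neq0$ one obtains $v=\lambda u$ and $u''=\lambda^2u$, whence $u(x)=A\sinh\lambda x$ on $(0,\frac12)$ (using $u(0)=0$) and $u(x)=B\cosh\lambda(1-x)$ on $(\frac12,1)$ (using $u'(1)=0$). Continuity at $\frac12$ gives $B=A\tanh(\lambda/2)$, and the jump relation together with $v(\frac12)=\lambda u(\frac12)$ reduces, after eliminating $A$ and $B$, to the characteristic equation $\cosh\lambda=\frac q2\sinh\lambda$, i.e.\ $\tanh\lambda=2/q$, equivalently $e^{2\lambda}=\frac{q+2}{q-2}$; this yields exactly the family \dref{4.5} (the hypotheses $q\neq2$, $q>2$ are precisely what make this meaningful and place $\mathrm{Re}\,\lambda_n>0$). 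Back-substituting and using the normalization $A=\frac{\sqrt2}{\lambda_n}\cosh\frac{\lambda_n}2$, $B=\frac{\sqrt2}{\lambda_n}\sinh\frac{\lambda_n}2$ reproduces the stated $\phi_n$ and $\Phi_n=(\phi_n,\lambda_n\phi_n)$. Separation is then immediate since $\lambda_{n+1}-\lambda_n=i\pi$. For (c) I would show that $\lambda_n$ is a simple zero of $\Delta(\lambda):=\cosh\lambda-\frac q2\sinh\lambda$: indeed $\Delta'(\lambda_n)=\sinh\lambda_n-\frac q2\cosh\lambda_n=(1-\frac{q^2}{4})\sinh\lambda_n\neq0$, because $q>2$ and $\sinh\lambda_n=(-1)^n\sinh(\frac12\ln\frac{q+2}{q-2})\neq0$; translating a simple zero of $\Delta$ into a simple pole of the explicitly computed resolvent $(\lambda-\A)^{-1}$ then shows each generalized eigenspace is one-dimensional.

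The main obstacle is (d), the Riesz basis property of $\{\Phi_n\}_{n\in\Z}$ in $\H$. One first notes that $\|\Phi_n\|$ is in fact constant in $n$ (a short computation using $|\cosh\lambda_n x|^2+|\sinh\lambda_n x|^2=\cosh(2ax)$ and $|\cosh\frac{\lambda_n}{2}|^2+|\sinh\frac{\lambda_n}{2}|^2=\cosh a$, with $a=\mathrm{Re}\,\lambda_n$ fixed). The plan is then to compare $\{\Phi_n\}$ with a reference Riesz basis of $\H$ built from $\{e^{in\pi x}\}_{n\in\Z}$: writing $\lambda_n=a+in\pi$, the building blocks $\sinh\lambda_n x$ and $\cosh\lambda_n(1-x)$ are, up to factors that are bounded and bounded away from zero, exponential sums $e^{\pm ax}e^{\pm in\pi x}$, so $\Phi_n$ differs from the corresponding reference vector by a correction that is $\ell^2$-summable in $n$. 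I would then invoke Bari's theorem (see \cite[Ch.~1]{Young}): once (i) quadratic closeness to a Riesz basis and (ii) completeness of $\{\Phi_n\}$ in $\H$ are verified — completeness following from compactness of $\A^{-1}$ together with a resolvent bound along the horizontal lines $\mathrm{Im}\,\lambda=(n+\frac12)\pi$ (a standard Keldysh-type argument, or the abstract Riesz-basis criteria for such string operators used in \cite{Xu,Xu2,Guo1}) — it follows that $\{\Phi_n\}_{n\in\Z}$ is a Riesz basis for $\H$. Finally, since $\H$ has a Riesz basis of eigenvectors of $\A$ with $\mathrm{Re}\,\lambda_n\equiv a$ uniformly bounded, $\A$ generates a $C_0$-group $e^{\A t}\Phi_n=e^{\lambda_n t}\Phi_n$ (in particular a $C_0$-semigroup), with growth bound $a$. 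The technically delicate points are the completeness step and the uniform control of $\|\Phi_n\|$ and of the boundary traces $\phi_n'(0)$; both are routine but require care.
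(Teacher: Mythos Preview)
The paper does not give its own proof of this lemma: it is quoted verbatim from \cite{Xu2} and stated without argument. There is therefore no ``paper's proof'' to compare against beyond the reference itself.

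Your proposal is a correct and self-contained sketch. The eigenvalue computation in (b) is right (the characteristic equation is indeed $\cosh\lambda=\tfrac{q}{2}\sinh\lambda$, equivalently $e^{2\lambda}=\tfrac{q+2}{q-2}$), the simplicity check in (c) via $\Delta'(\lambda_n)\neq0$ is clean, and your observation that $\|\Phi_n\|^2$ is genuinely independent of $n$ is correct: the first half-interval contributes a factor $|\cosh\tfrac{\lambda_n}{2}|^2$, the second half contributes $|\sinh\tfrac{\lambda_n}{2}|^2$, and their sum is $\cosh a$ with $a=\mathrm{Re}\,\lambda_n$, which is fixed. For part (d), the source \cite{Xu2} that the paper cites actually develops an abstract Riesz-basis criterion (roughly: a discrete operator whose eigenvalues lie in a vertical strip and whose eigenvectors are quadratically close to a known Riesz basis yields a Riesz basis of eigenvectors) and then applies it to precisely this coupled-string operator; your Bari-theorem route is essentially the same mechanism, and your identification of completeness as the one genuinely nontrivial step is accurate---in \cite{Xu2} this is handled by the abstract criterion together with a resolvent estimate, exactly along the lines you indicate. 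So your plan matches the literature argument in substance, differing only in packaging.
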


\begin{lemma}\label{Lem4.2}\cite{Xu2}
Let $\A$ be defined by \dref{4.3}-\dref{4.4} and $q\in
Q=(2,+\infty)$. Then the adjoint operator $\A^\ast$ of $\A$ is given
by
\begin{equation}\label{4.8}
\A^\ast(u,v)=-(v,u^{\prime\prime}),
\end{equation}
with the domain
\begin{equation*}\label{4.9}
\begin{array}{l}
D(\A^\ast)=\left\{ (u,v)\in H^1(0,1)\times
H_E^1(0,1)\Bigg|\begin{array}{l}
u(0)=u^\prime (1)=0,\;u|_{[0,\frac{1}{2}]}\in H^2(0,\frac{1}{2}),\\
u|_{[\frac{1}{2},1]}\in
H^2(\frac{1}{2},1),\;u^\prime(\frac{1}{2}^-)-u^\prime(\frac{1}{2}^+)=-qv(\frac{1}{2})
\end{array}
\right\},
\end{array}
\end{equation*}
and $\sigma (\A^\ast)=\sigma (\A)$. The eigenfunctions $\Psi_n(x)$
of $\A^\ast$ corresponding to $\overline{\lambda}_n$ are given by
\begin{equation}\label{4.10}
\Psi_n(x)=\left(\overline{\phi}_n(x),-\overline{\lambda}_n\overline{\phi}_n(x)\right),\quad
\forall\; n\in\Z.
\end{equation}
\end{lemma}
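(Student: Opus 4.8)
The plan is to obtain $\A^\ast$ by integration by parts, pin down its domain by a two-sided inclusion, transfer the spectral information from $\A$, and verify the eigenfunction formula by direct substitution.

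First I would fix $(u,v)$ in the set that is claimed to be $D(\A^\ast)$ and an arbitrary $(\phi,\psi)\in D(\A)$, and evaluate $\langle\A(\phi,\psi),(u,v)\rangle=\int_0^1\psi'\overline{u'}\,dx+\int_0^1\phi''\overline{v}\,dx$ by splitting each integral over $(0,\frac{1}{2})$ and $(\frac{1}{2},1)$ and integrating by parts on each piece (legitimate because $\phi$ and $u$ are $H^2$ there). The endpoint terms at $x=0$ and $x=1$ drop out because $\psi(0)=v(0)=0$, $u(0)=0$ and $\phi'(1)=u'(1)=0$; at the joint $x=\frac{1}{2}$ the term $q\psi(\frac{1}{2})\overline{v(\frac{1}{2})}$ coming from the $D(\A)$-relation $\phi'(\frac{1}{2}^-)-\phi'(\frac{1}{2}^+)=q\psi(\frac{1}{2})$ is exactly cancelled by the term $-q\psi(\frac{1}{2})\overline{v(\frac{1}{2})}$ coming from the relation $u'(\frac{1}{2}^-)-u'(\frac{1}{2}^+)=-qv(\frac{1}{2})$ imposed in the claimed domain --- this sign reversal at the joint is precisely what forces the cancellation. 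What remains is $-\int_0^1\phi'\overline{v'}\,dx-\int_0^1\psi\overline{u''}\,dx=\langle(\phi,\psi),-(v,u'')\rangle$, so $(u,v)\in D(\A^\ast)$ with $\A^\ast(u,v)=-(v,u'')$.

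For the reverse inclusion I would argue from the abstract definition: $(u,v)\in D(\A^\ast)$ means there is $(f,g)\in\H$ with $\langle\A(\phi,\psi),(u,v)\rangle=\langle(\phi,\psi),(f,g)\rangle$ for all $(\phi,\psi)\in D(\A)$. Testing against $(\phi,0)$ with $\phi\in C_c^\infty(0,1)$ shows, by a standard distributional argument, that $v\in H^1(0,1)$ and $f=-v$; testing against $(0,\psi)$ with $\psi\in C_c^\infty$ supported inside one of the two subintervals shows that $u$ is $H^2$ on each piece and $g=-u''$. Substituting this regularity back into the integration by parts of the first step, now for a general $(\phi,\psi)\in D(\A)$, leaves only the endpoint and joint terms, which must therefore vanish for every admissible test pair. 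Since $\phi'(0)$, $\psi(1)$ and $\psi(\frac{1}{2})$ can be prescribed independently --- for each choice a compatible $\phi\in D(\A)$ exists --- one reads off successively $v(0)=0$, $u'(1)=0$ and $u'(\frac{1}{2}^-)-u'(\frac{1}{2}^+)=-qv(\frac{1}{2})$, while the continuity of $u$ and $v$ across $\frac{1}{2}$ is contained in $u,v\in H^1(0,1)$. Hence $D(\A^\ast)$ coincides with the stated set. This last bookkeeping at the interior joint --- isolating each boundary and transmission condition by a suitable choice of test pair --- is the part of the argument requiring the most care.

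For the spectrum, Lemma \ref{Lem4.1} gives that $\A^{-1}$ is compact, so $\A$ and $\A^\ast$ both have compact resolvent and hence pure point spectrum, and $\lambda\in\sigma(\A)$ iff $\overline\lambda\in\sigma(\A^\ast)$; since the list \dref{4.5} satisfies $\overline{\lambda_n}=\frac{1}{2}\ln\frac{q+2}{q-2}-in\pi=\lambda_{-n}$, the set $\{\lambda_n\}_{n\in\Z}$ is invariant under conjugation and $\sigma(\A^\ast)=\overline{\sigma(\A)}=\sigma(\A)$. Finally, from Lemma \ref{Lem4.1} the eigenrelation $\A\Phi_n=\lambda_n\Phi_n$ with $\Phi_n=(\phi_n,\lambda_n\phi_n)$ encodes $\phi_n''=\lambda_n^2\phi_n$, $\phi_n(0)=\phi_n'(1)=0$ and $\phi_n'(\frac{1}{2}^-)-\phi_n'(\frac{1}{2}^+)=q\lambda_n\phi_n(\frac{1}{2})$; conjugating these relations one checks that $\Psi_n=(\overline{\phi_n},-\overline{\lambda_n}\,\overline{\phi_n})$ lies in $D(\A^\ast)$ (its joint condition is $\overline{\phi_n}'(\frac{1}{2}^-)-\overline{\phi_n}'(\frac{1}{2}^+)=-q(-\overline{\lambda_n}\,\overline{\phi_n}(\frac{1}{2}))$, the required one), and computes $\A^\ast\Psi_n=-(-\overline{\lambda_n}\,\overline{\phi_n},\overline{\phi_n}'')=(\overline{\lambda_n}\,\overline{\phi_n},-\overline{\lambda_n}^2\,\overline{\phi_n})=\overline{\lambda_n}\Psi_n$, so $\Psi_n$ is indeed an eigenfunction of $\A^\ast$ associated with $\overline{\lambda_n}$. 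Apart from these routine integrations by parts and substitutions, the genuine obstacle remains the reverse domain inclusion in the second step.
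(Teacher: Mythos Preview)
Your argument is correct and complete. Note, however, that the paper does not actually supply a proof of this lemma: it is quoted verbatim from \cite{Xu2} and stated without demonstration. Your integration-by-parts verification of the adjoint formula, the two-sided domain inclusion, the conjugation symmetry $\overline{\lambda_n}=\lambda_{-n}$ for the spectral identity, and the direct check of $\A^\ast\Psi_n=\overline{\lambda_n}\Psi_n$ constitute exactly the standard proof one would find in the cited source, so there is nothing to contrast at the level of strategy.
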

A direct calculation shows that $\{\Psi_n(x)\}$ is biorthogonal to
$\{\Phi_n(x)\}$. Hence, the solution of \dref{4.2} can be expressed
as
\begin{equation}\label{4.12}
X(t)=\sum_{n\in\Z}e^{\lambda_n t}\langle X(0),\Psi_n\rangle\Phi_n.
\end{equation}
Define the observation operator $\mathcal{C}$ from $\H$ to $\C$ to
be
\begin{equation}\label{4.13}
\mathcal{C}(f,g)=f^\prime (0),\quad (f,g)\in D(\A).
\end{equation}
Then
\begin{equation}\label{4.14}
y(t)=\sum_{n\in\Z}e^{\lambda_n t}\langle
X(0),\Psi_n\rangle\mathcal{C}\Phi_n+d(t).
\end{equation}

The succeeding  Corollary \ref{Co4.3} is a  direct consequence of
Theorem \ref{Th1.3} by noticing that for system \dref{4.1}, the
relevant function and parameters now are
\begin{equation*}\label{4.14.2}
f(q)=\dfrac{1}{2}\ln\dfrac{q+2}{q-2},\; \mu_n=n\pi,\; L=2,\;
\kappa_n=\sqrt{2}\cosh\dfrac{\lambda_n}{2},
\end{equation*}
and a simple calculation shows that
\begin{equation*}\label{4.14.3}
\dfrac{\sqrt{2}}{2}\left[\left(\dfrac{q+2}{q-2}\right)^{\frac{1}{4}}-\left(\dfrac{q-2}{q+2}\right)^{\frac{1}{4}}\right]
\leq |\kappa_n| \leq
\dfrac{\sqrt{2}}{2}\left[\left(\dfrac{q+2}{q-2}\right)^{\frac{1}{4}}+\left(\dfrac{q-2}{q+2}\right)^{\frac{1}{4}}\right],
\;\; \forall\; n\in\Z.
\end{equation*}
The corollaries corresponding to Theorem \ref{Th1.1} are  omitted
here.

\begin{corollary}\label{Co4.3}
Suppose that  $q\in Q=(2,+\infty)$ in system \dref{4.1} and the
disturbance is bounded, {\it i.e.} $|d(t)|\leq M$ for some $M>0$ and
all $t\geq 0$. Then for any $T_2-2>T_1\geq 2$,
\begin{equation*}\label{4.21}
\lim_{T_1\rightarrow +\infty}q_{T_1}=q,\; \lim_{T_1\rightarrow
+\infty}\|(\hat{u}_{0T_1},\hat{u}_{1T_1})-(u_0,u_1)\|=0,
\end{equation*}
where
\begin{equation}\label{4.22}
q_{T_1}=\dfrac{2\left(\|y\|_{L^2(T_1,T_2)}+\|y\|_{L^2(T_1-2,T_2-2)}\right)}{\|y\|_{L^2(T_1,T_2)}-\|y\|_{L^2(T_1-2,T_2-2)}},
\end{equation}
and
\begin{equation}\label{4.22.2}
\begin{array}{l}
\hat{u}_{0T_1}(x)=\left\{
\begin{array}{ll}
\disp\dfrac{1}{2}\sum_{n\in\Z}\left(\int_{T_1}^{T_1+2}
y(t)e^{-\lambda_n t}dt\right)\dfrac{\sinh\lambda_n x}{\lambda_n},& 0<x<\dfrac{1}{2},\\
\disp\dfrac{1}{2}\sum_{n\in\Z}\left(\int_{T_1}^{T_1+2}
y(t)e^{-\lambda_n
t}dt\right)\tanh\dfrac{\lambda_n}{2}\dfrac{\cosh\lambda_n
(1-x)}{\lambda_n},& \dfrac{1}{2}<x<1.
\end{array}\right.\crr\disp
\hat{u}_{1T_1}(x)=\left\{
\begin{array}{ll}
\disp\dfrac{1}{2}\sum_{n\in\Z}\left(\int_{T_1}^{T_1+2}
y(t)e^{-\lambda_n t}dt\right)\sinh\lambda_n x,& 0<x<\dfrac{1}{2},\\
\disp\dfrac{1}{2}\sum_{n\in\Z}\left(\int_{T_1}^{T_1+2}
y(t)e^{-\lambda_n t}dt\right)\tanh\dfrac{\lambda_n}{2}\cosh\lambda_n
(1-x),& \dfrac{1}{2}<x<1.
\end{array}\right.
\end{array}
\end{equation}
\end{corollary}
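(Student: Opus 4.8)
The plan is to deduce Corollary~\ref{Co4.3} directly from Theorem~\ref{Th1.3}, so the real work is to check that the operator $\A$ of \dref{4.3}--\dref{4.4} together with the observation operator $\mathcal{C}$ of \dref{4.13} satisfies every hypothesis of Theorem~\ref{Th1.1} (and the two additional assumptions of Theorem~\ref{Th1.3}), and then to translate the abstract formulas \dref{1.1.20} and \dref{1.3.20} into the explicit expressions \dref{4.22} and \dref{4.22.2}. First I would read off the spectral data from Lemma~\ref{Lem4.1}: $\A$ has compact resolvent, its eigenvalues are $\lambda_n=f(q)+i\mu_n$ with $f(q)=\tfrac12\ln\tfrac{q+2}{q-2}$ and $\mu_n=n\pi$, and the eigenfunctions $\{\Phi_n\}_{n\in\Z}$ form a Riesz basis for $\H$. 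Condition (i) then holds with $L=2$, since $\mu_nL/(2\pi)=n\in\Z$, $\mu_n$ is $q$-independent, and $f:(2,+\infty)\to(0,+\infty)$ is a continuous strictly decreasing bijection, hence invertible with continuous inverse --- this simultaneously supplies the anti-stability $f(q)>0$ and the continuity of $f^{-1}$ demanded by Theorem~\ref{Th1.3}. Condition (ii) is precisely the Riesz basis assertion of Lemma~\ref{Lem4.1}, and the biorthogonal family $\{\Psi_n\}$ of Lemma~\ref{Lem4.2} provides the eigenvectors of $\A^\ast$ used in the proof of Theorem~\ref{Th1.1}.

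The one genuinely computational step --- and the main obstacle --- is condition (iii), the uniform two-sided bound $\kappa\le|\kappa_n|\le K$ for $\kappa_n=\mathcal{C}\Phi_n$. Differentiating the formula for $\phi_n$ on $(0,\tfrac12)$ gives $\phi_n'(0)=\sqrt2\cosh\tfrac{\lambda_n}{2}$, so $\kappa_n=\sqrt2\cosh\tfrac{\lambda_n}{2}$. Writing $\tfrac{\lambda_n}{2}=\tfrac{f(q)}{2}+\tfrac{in\pi}{2}$ and using $\cosh(a+ib)=\cosh a\cos b+i\sinh a\sin b$, one sees $|\cosh\tfrac{\lambda_n}{2}|$ equals $\cosh\tfrac{f(q)}{2}$ for $n$ even and $\sinh\tfrac{f(q)}{2}$ for $n$ odd; since $f(q)>0$ both are strictly positive, whence
\[
\sqrt2\,\sinh\tfrac{f(q)}{2}\le|\kappa_n|\le\sqrt2\,\cosh\tfrac{f(q)}{2}\qquad\text{for all }n\in\Z,
\]
which, after substituting $e^{f(q)/2}=\big(\tfrac{q+2}{q-2}\big)^{1/4}$, is exactly the pair of bounds displayed before the corollary. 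In particular $|C\phi_n|$ is uniformly bounded, so admissibility of $\mathcal{C}$ for $\mathcal{T}_q$ follows as in the proof of Theorem~\ref{Th1.1}, and (iii) is verified.

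With all hypotheses in place, Theorem~\ref{Th1.3} yields $\lim_{T_1\to\infty}q_{T_1}=q$ and $\lim_{T_1\to\infty}\|(\hat u_{0T_1},\hat u_{1T_1})-(u_0,u_1)\|=0$ for $q_{T_1}$ and $(\hat u_{0T_1},\hat u_{1T_1})$ given by \dref{1.1.20} and \dref{1.3.20} with $L=2$ and $\kappa_n=\sqrt2\cosh\tfrac{\lambda_n}{2}$; it remains to make these explicit. Inverting $s=\tfrac12\ln\tfrac{q+2}{q-2}$ gives $f^{-1}(s)=\tfrac{2(e^{2s}+1)}{e^{2s}-1}$, and taking $s=\tfrac12\ln\big(\|y\|_{L^2(T_1,T_2)}/\|y\|_{L^2(T_1-2,T_2-2)}\big)$ turns \dref{1.1.20} into \dref{4.22}. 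For the initial data, substitute $\Phi_n=(\phi_n,\lambda_n\phi_n)$ into \dref{1.3.20} and simplify $\phi_n/\kappa_n$: on $(0,\tfrac12)$ it is $\tfrac{\sinh\lambda_n x}{\lambda_n}$, while on $(\tfrac12,1)$, using $\sinh(\lambda_n/2)/\cosh(\lambda_n/2)=\tanh\tfrac{\lambda_n}{2}$, it is $\tanh\tfrac{\lambda_n}{2}\,\tfrac{\cosh\lambda_n(1-x)}{\lambda_n}$; multiplying by $\lambda_n$ gives the companion expressions for $\hat u_{1T_1}$. This reproduces \dref{4.22.2}. Beyond routine algebra, the only points requiring attention are the parity computation for $|\cosh\tfrac{\lambda_n}{2}|$ above and the exclusion $q\neq2$ that keeps $f(q)$ and the bounds finite --- both ensured by the hypothesis $q\in Q=(2,+\infty)$.
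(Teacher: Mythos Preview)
Your proposal is correct and follows exactly the approach of the paper: the corollary is stated there as a direct consequence of Theorem~\ref{Th1.3} once one records $f(q)=\tfrac12\ln\tfrac{q+2}{q-2}$, $\mu_n=n\pi$, $L=2$, $\kappa_n=\sqrt2\cosh\tfrac{\lambda_n}{2}$, and the two-sided bound on $|\kappa_n|$ --- precisely the verifications you carry out. Your parity computation for $|\cosh\tfrac{\lambda_n}{2}|$ and the algebra passing from \dref{1.1.20}, \dref{1.3.20} to \dref{4.22}, \dref{4.22.2} simply make explicit what the paper leaves to the reader.
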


As before, we present some numerical simulations for system
\dref{4.1} to showcase the effectiveness of the algorithm proposed
in Corollary \ref{Co4.3}. The anti-damping coefficient $q$ and
initial values $u_0(x)$ and $u_1(x)$ are chosen to be
\begin{equation}\label{4.23}
q=3,\quad u_0(x)=\sin x,\quad u_1(x)=\cos x,
\end{equation}
and the observation is corrupted by the disturbance
\begin{equation}\label{4.24}
d(t)=\sin\dfrac{t^2}{10+t}+\cos 10t.
\end{equation}
The observation is obtained from \dref{4.14} by a finite series
approximation, that is, $\{n\in\N\}$ is replaced by $\left\{n\in\N
|\; |n|\leq 5000\right\}$. The relevant parameters in Corollary
\ref{Co4.3} are chosen to be $T_2=T_1+1$, and  $T_1$ increases  from
$2$ to $8$. The corresponding anti-damping coefficients $q_{T_1}$
recovered from \dref{4.22} are plotted  in Figure \ref{Fig-4}. It
can be seen that $q_{T_1}$ converges to the real value  $q=3$ as
$T_1$ increases. Let $T_1=0,\;3,\;7$ in \dref{4.22.2}, and the
reconstructed initial values are shown in Figure \ref{Fig-4}, from
which it is seen that the reconstructed initial values become closer
to the real ones as $T_1$ increases.

\begin{figure}[H]
\centering
\includegraphics[width=3.2in]{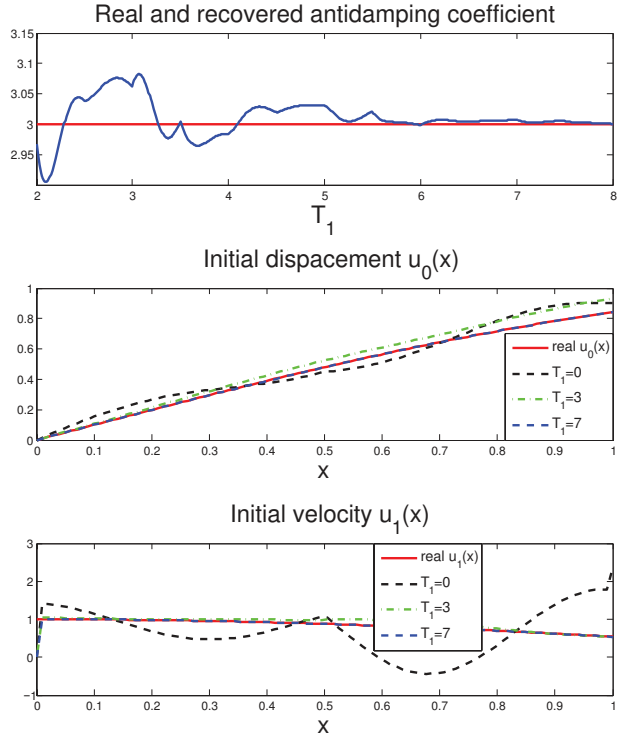}
\caption{anti-damping coefficient $q$ (upper), initial values $u_0$
(middle) and $u_1$ (lower)}\label{Fig-4}
\end{figure}

\section{Concluding remarks}\label{Sec5}

In this paper, we propose an algorithm to reconstruct simultaneously
an anti-damping coefficient and an initial value for some
anti-stable PDEs. When the measured output is exact, the recovered
values are exact whereas if the measured output suffers from bounded
unknown disturbance, the approximated values of the anti-damping
coefficient and initial value can also be obtained. Some numerical
examples are carried out to validate the effectiveness of the
algorithm. It is very promising to apply the algorithm presented
here to stabilization of anti-stable systems with an unknown
anti-damping coefficient.

\section*{Acknowledgements}

This work was supported by the National Natural Science Foundation
of China and the National Research Foundation of South Africa.

\end{document}